\newtheorem{theorem}{Theorem}[section]
\newtheorem{proposition}[theorem]{Proposition}
\newtheorem{lemma}[theorem]{Lemma}
\theoremstyle{definition}
\newtheorem{definition}[theorem]{Definition}
\newtheorem{remark}[theorem]{Remark}
\numberwithin{equation}{section}
\theoremstyle{definition}
\newtheorem*{theorem*}{Theorem}
\newcommand{\bbZ}{\mathbb{Z}}
\newcommand{\bbP}{\mathbb{P}}
\newcommand{\bbC}{\mathbb{C}}
\newcommand{\bbQ}{\mathbb{Q}}
\newcommand{\Oo}{\mathcal O}
\newcommand{\E}{\mathcal E}
\newcommand{\p}{\mathcal P}
\newcommand{\s}{\mathcal S}
\DeclareMathOperator{\diam}{diam}
\DeclareMathOperator{\dist}{dist}
\DeclareMathOperator{\PGL}{PGL}
\begin{document}

\title[Rational maps, bad reduction and domains of quasiperiodicity]{Rational maps with bad reduction and domains of quasiperiodicity}
\author{V\'ictor Nopal-Coello}
\address{Centro de Investigaci\'on en Matem\'aticas, A.P. 402 \\
C.P. 36240, Guanajuato, Gto., M\'{e}xico}
\email{victor.nopal@cimat.mx}

\author{M\'onica Moreno Rocha}
\email{mmoreno@cimat.mx}

\keywords{p-adic dynamics, rational functions, bad reduction}

\subjclass[2010]{37P05, 37P40}

%\date{\today}

\maketitle

\begin{abstract}
Consider a rational map $R$ of degree $d\geq 2$ with coefficients over the non-archimedean field $\bbC_p$, with $p$ a fixed prime number. If $R$ has a cycle of Siegel disks and has good reduction, then it was shown by Rivera-Letelier in \cite{RLT} that a new rational map $Q$ can be constructed from $R$, in such a way that $Q$ will exhibit a cycle of $m$-Herman rings. In this paper, we address the case of rational maps with bad reduction and provide an extension of Rivera-Letelier's result for these class of maps.
\end{abstract}

\section{Introduction}

Let $R\in \bbC(z)$ be a rational function of degree greater or equal than two. The \emph{Fatou set} of $R$, denoted by $F(R)$, is the largest open set where the iterates of $R$ form an equicontinuous family. A remarkable result of D.~Sullivan in \cite{Sul} states that each Fatou component must be eventually periodic. A collective endeavor spanning almost a century provided the classification of each periodic component into five types: attracting basins, super-attracting basins, parabolic basins, Siegel disks and Herman rings.

Siegel disks and Herman rings are also known as \emph{rotation domains} since an iterate of the map restricted to one of these periodic components is analytically conjugated to a rigid irrational rotation acting either on a disk or a ring domain, respectively. It was not an easy task to construct an example of a rational function with a cycle of Herman rings as illustrated by the works of M.~Herman in \cite{Herman} and M.~Shishikura in \cite{Sh}.

Consider now the problem of constructing a rational function that exhibits the analogue of a cycle of Herman rings in the $p$-adic setting. In~\cite{RLT}, J.~Rivera-Letelier proved that, if $R$ is a rational function with \emph{good reduction} and exhibits a cycle of Siegel disks, then it is possible to construct a new rational function of higher degree with a cycle of \emph{$m$-Herman rings} (see Theorem~\ref{thm:3} for the full statement of this result). Briefly, we say that $U$ is an $m$-Herman ring for $R$ if it belongs to its \emph{domain of quasiperiodicity} and for some integer $m\geq 1$, the set $U$ can be written as $\bbP(\bbC_p)\setminus (B_0\cup B_1\cup \ldots \cup B_m)$, where each $B_j$ represents a closed ball in the projective space of $\bbC_p$.

Our aim is to show that for a rational function with an $n$-cycle of Siegel disks and \emph{bad reduction}, it is still possible to produce a new rational function with an $n$-cycle of $1$-Herman rings (see Theorem~\ref{GRL}).

The organization of the paper is as follows: in Section 2 we provide a short introduction to $p$-adic analysis and state without proofs some of the most important results from rational iteration in the $p$-adic setting. In Section 3 we review several results known for rational maps with good reduction and their domains of quasiperiodicity. Then, we state and prove our main result for rational maps with bad reduction. In Section 4 we illustrate the conclusion of Theorem~\ref{GRL} with a couple of examples constructed from a rational map with bad reduction.

\section{Preliminaries}

\subsection{$p$-adic Analysis}

Let $p$ denote a fixed prime number. Since any rational number $r\in \bbQ$ can be written as $r=p^\alpha a/b$ for $\alpha, a,b\in \bbZ$ and $p$ not dividing either $a$ or $b$, then one can define the \emph{$p$-adic norm} as
$$|r|_p = p^{-\alpha},$$
with the convention $|0|_p=0$. It is not difficult to verify that $|\cdot|_p$ defines a norm whose triangle inequality is replaced by the \emph{non-archimedean} condition given by
$$|x+y|_p\leq \max\{|x|_p,|y|_p\}.$$

Denote by $\bbQ_p$ the completion of $\bbQ$ with respect to $|\cdot|_p$, by $\overline{\bbQ_p}$  the algebraic closure of $\bbQ_p$ and by $\bbC_p$ the completion of $\overline{\bbQ_p}$, which is algebraically closed. Throughout this work, we only consider the valued field $(\bbC_p,|\cdot|_p)$ and its projective space, $\bbP(\bbC_p)$, endowed with the \emph{non-archimedean chordal metric}, $\rho_p$. Its expression in homogeneous coordinates is given by 
$$\rho_p(P_1,P_2) = \frac{|X_1Y_2-X_2Y_1|_p}{\max\{|X_1|_p,|Y_1|_p\} \max\{|X_2|_p,|Y_2|_p\}},$$
where $P_i=[X_i,Y_i]\in \bbP(\bbC_p)$ for $i=1,2$. If $P_i\neq [1,0]$ for $i=1,2$, then under the change of coordinates $z_i=X_i/Y_i$, one obtains
$$\rho_p(z_1,z_2)=\frac{|z_1-z_2|_p}{\max\{|z_1|_p,1\} \max\{|z_2|_p,1\}}.$$

From now on, we drop the subscript $p$ form the $p$-adic norm and the chordal metric.
Consider
\begin{itemize}
\item the ring of integers $\Oo=\{z\in\bbC_p~|~|z|\leq 1\}$,
\item the group of units $\Oo^*=\{z\in \bbC_p~|~|z|=1\}$,
\item the maximal ideal of $\Oo$ given by $\mathfrak{p}=\{z\in \bbC_p~|~|z|<1\}$, and
\item the residue field of $\Oo$ given by $\tilde \bbC_p=\Oo/\mathfrak{p}$.
\end{itemize}
If $z\in \Oo$, its \emph{reduction modulo $\mathfrak{p}$} is denoted by $\tilde z$, and clearly, $\tilde z\in \tilde \bbC_p$. The mapping that takes $z\in \Oo$ to $\tilde z\in \tilde \bbC_p$ can be naturally extended to $\bbP(\bbC_p)$ by setting $\tilde z= \infty\in \bbP(\tilde \bbC_p)$ for all $z\in \bbP(\bbC_p)\setminus \Oo$.

The group $|\bbC_p|:=\{|z|~|~z\in \bbC_p\setminus \{0\}\}$ is called the \emph{valuation group} of $\bbC_p$.
Let $r\in |\bbC_p|$ and $z_0\in \bbC_p$. A \emph{ball} or a \emph{disk} in $\bbC_p$ centered at $z_0$ and radius $r$ is denoted as $B_r(z_0)=\{z\in \bbC_p~|~|z-z_0|\leq r\}$ and by $D_r(z_0)=\{z\in \bbC_p~|~|z-z_0|< r\}$, respectively. If $B$ is a ball in $\bbC_p$ then $\bbP(\bbC_p)\setminus B$ is a disk in the projective space, and viceversa.

Four immediate consequences from the non-archimedean condition are:
\begin{enumerate}
\item All triangles in $\bbC_p$ are isosceles: if $|x|>|y|$ then $|x+y|=|x|$.
\item If $U_0$ and $U_1$ are balls (or disks) in $\bbC_p$ with non-empty intersection, then either $U_0\subset U_1$ or $U_1\subset U_0$.
\item The radius of a ball (or a disk) in $\bbC_p$ is equal to its diameter.
\item Every point in a ball (or a disk) in $\bbP(\bbC_p)$ is its center.
\end{enumerate}

\subsubsection{Geometric objects}

The topology determined by the non-archimedean metric reduces every connected component of $\bbP(\bbC_p)$ to a single point. Hence, it is necessary to extend the idea of connected component in this setting. Most of the following concepts can be found in the dissertations by R.L. Benedetto in \cite{BenT}, and by J.~Rivera-Letelier in \cite{RLT} (alternatively, see also \cite{RL}).

\begin{definition}[Affinoids]
Let $U_1, U_2,\ldots, U_N\subset \bbP(\bbC_p)$ be a finite collection of balls (or disks). A \emph{closed (open) connected affinoid} is defined as $\bigcap_{j=1}^N U_j$. The finite union of closed (open) connected affinoids is a \emph{closed (open) affinoid}. The complement of a closed affinoid is an open affinoid.
\end{definition}

\begin{definition}[Analytic component]
Let $U\subset \bbP(\bbC_p)$ be a given subset and $x\in U$. The \emph{analytic component of $U$ containing $x$} is the union of all (closed or open) connected affinoids that contain $x$ and that are contained in $U$.
\end{definition}

An increasing union of connected affinoids defines a \emph{connected analytic space}. A disk is an example of a connected analytic space. Let $B_0\subset B_1\subset\ldots$ be an increasing sequence of balls in $\bbP(\bbC_p)$. Its increasing union, $B=\bigcup B_j$, is either a disk or it is equal to $\bbP(\bbC_p)$. The collection of open annuli (or disks if $B=\bbP(\bbC_p)$) given by $\{A_j~:A_j=B\setminus B_j~\}_{j\geq 0}$ is called a \emph{vanishing chain}. Two vanishing chains are equivalent if each chain is cofinal in the other under containment, see \cite[Chapter 10]{BR}.

\begin{definition}[End]
An equivalence class of vanishing chains, denoted by $\p$, is called an \emph{end}. The associated ball (resp. disk) of $\p$, denoted as $B_\p$ (resp. $D_\p$), is equal to $\bigcup B_j$ (resp. $\bbP(\bbC_p)-\bigcup B_j$) for any vanishing chain $\{A_j\}_{j\geq 0}$ and its definition is independent of the vanishing chain.
\end{definition}

Let $\p$ be an end associated to the \emph{canonical disk} $D_\p=\{z~|~|z|<1\}$. For each $w\in \bbP(\tilde \bbC_p)$, denote by $\p_w$ the end whose associated disk is
$$D_{\p_w}=\{z=w+\alpha~|~\alpha\in D_\p\},$$
thus, for any $z\in D_{\p_w}$, $\tilde z=w$. The set $\s_c=\{\p_w\}_{w\in \bbP(\tilde \bbC_p)}$ is called the \emph{canonical projective system} associated to $\p$ (or associated to the canonical disk $D_\p$). If $\p'$ is any other given end with disk $D_{\p'}$, its projective system $\s'$ is naturally defined under the action of a map $\psi \in \PGL(2,\Oo)$ so that $\psi(D_{\p'})=D_\p$.

\subsubsection{Rational maps and reduction}

A rational function $R\in \bbC_p(z)$ of degree $d\geq 2$ induces a rational map of same degree acting over the complex projective space. Written in homogeneous coordinates, the induced map $R:\bbP(\bbC_p)\to \bbP(\bbC_p)$ becomes $R(X,Y)=[F(X,Y),G(X,Y)]$, where $F,G\in \bbC_p[X,Y]$ are polynomials  in $X$ and $Y$ of degree $d$, namely
$$F(X,Y)= \sum_{k=0}^d a_k X^{d-k}Y^k,~~~G(X,Y)=\sum_{k=0}^d b_k X^{d-k} Y^k.$$
Whenever convenient, we shall write the rational map in $z$-coordinate as $R(z) = f(z)/g(z)$, where $F(X,Y)=Y^df(X/Y)$ and $G(X,Y)=Y^dg(X/Y)$. Also, we shall refer to a rational function $R\in \bbC_p(z)$ with the understanding that $R$ also represents its induced map over $\bbP(\bbC_p)$.

Following \cite{Sil}, we say that a pair of polynomials $(F,G)$ is \emph{normalized} if the coefficients of $F$ and $G$ lie in the ring of integers $\Oo$ and at least one coefficient of $F$ or $G$ is in $\Oo^*$. 

\begin{definition}[Rational map with good reduction]
Let $R=[F,G]$ where $(F,G)$ is a normalized pair. The \emph{reduction of $R$ mod $\mathfrak p$} is given by the expression
\begin{align*}
\tilde R(X,Y) &= [\tilde F(X,Y), \tilde G(X,Y)],\\
&=\left[ \sum_{k=0}^d \tilde a_k X^{d-k}Y^k,\sum_{k=0}^d \tilde b_k X^{d-k} Y^k\right].
\end{align*}
We say $R$ has \emph{good reduction} if $\deg(\tilde R)=\deg(R)$. If $R$ does not have good reduction, we say that $R$ has \emph{bad reduction}.
\end{definition}

\subsection{$p$-adic Iteration}
Let $R$ be a rational map of degree at least $2$. The expression $R^n$ denotes the $n$-fold composition of $R$ with itself $n$ times. A point $z_0\in \bbP(\bbC_p)$ is \emph{periodic of period $n$} if $R^n(z_0)=z_0$ and $n$ is the least positive integer that satisfies this condition. If $z_0\in \bbC_p$ is a periodic point of period $n$, its \emph{multiplier} is given by $\lambda:=(R^n)'(z_0)$. Multipliers are invariant under conjugacies in $\PGL(2,\Oo)$, \cite{Sil}. Thus, if $z_0$ is the point at infinity, a change of coordinates sending $z_0$ to a point in $\bbC_p$ allows us to compute its multiplier.

A periodic point with multiplier $\lambda$ is \emph{super-attracting, attracting, repelling} or \emph{indifferent} if $\lambda=0$, $|\lambda|<1$, $|\lambda| >1$ or $|\lambda|=1$, respectively.

Following \cite{Ben}, we define the \emph{Fatou set} of $R$, denoted by $F(R)$, as the set of points $z\in \bbP(\bbC_p)$ for which there exists an open neighborhood in $\bbP(\bbC_p)$ where $\{R^n\}_{n\geq 1}$ is an equicontinuous family with respect to the chordal distance $\rho$. The \emph{Julia set} of $R$, denoted by $J(R)$, coincides with $\bbP(\bbC_p)\setminus F(R)$.

The following are some of the properties  the Fatou and Julia sets satisfy in the p-adic setting,  proofs can be found in \cite{BenT}.

\begin{proposition}\label{prop:JF}
Let $R\in \bbC_p(z)$ be given. Then
\begin{enumerate}
\item $F(R)$ is open, $J(R)$ is closed and both sets are totally invariant under $R$. Moreover, $F(R^n)=F(R)$ and $J(R^n)=J(R)$ for all $n\geq 1$.
\item $F(R)$ is never empty. $J(R)$ has always empty interior.
\item A periodic point lies in $J(R)$ if and only if it is repelling.
\end{enumerate}
\end{proposition}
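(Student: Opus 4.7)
The plan is to handle the three parts in turn, using only the definition of the Fatou set together with ultrametric Taylor expansion and the openness/continuity of $R\colon\bbP(\bbC_p)\to\bbP(\bbC_p)$.

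Part (1) I would dispatch as essentially formal. Openness of $F(R)$ is immediate from the definition, since membership is witnessed by an open neighborhood; hence $J(R)$ is closed. For total invariance: if $\{R^n\}$ is equicontinuous at $z$, then a local inverse branch at $R(z)$ (available away from critical points, which are finite in number) carries equicontinuity from $z$ forward to $R(z)$, while for any preimage $w\in R^{-1}(z)$ the continuity of $R$ at $w$ composed with equicontinuity of $\{R^n\}$ at $z$ gives equicontinuity of $\{R^{n+1}\}_n$ at $w$. The inclusion $F(R)\subseteq F(R^n)$ is immediate, since any subfamily of an equicontinuous family is equicontinuous; for the reverse I would write $m=kn+r$ with $0\leq r<n$, so that equicontinuity of $\{(R^n)^k\}_k$ together with the non-archimedean Lipschitz estimate for each of $R,R^2,\ldots,R^{n-1}$ on a sufficiently small ball produces equicontinuity of $\{R^m\}_m$.

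I would tackle part (3) next, as it feeds back into part (2). Let $z_0$ be periodic of period $n$ with multiplier $\lambda$. Since multipliers are $\PGL(2,\Oo)$-invariant, I may assume $z_0\in\bbC_p$ and Taylor expand
\[
R^n(z) = z_0 + \lambda(z-z_0) + (\text{terms of order at least }2).
\]
If $|\lambda|>1$, then on a sufficiently small ball the ultrametric triangle inequality forces $|R^n(z)-z_0|=|\lambda|\cdot|z-z_0|$, so iterates separate nearby points by a factor $|\lambda|^k\to\infty$, precluding equicontinuity and placing $z_0\in J(R^n)=J(R)$. If $|\lambda|\leq 1$, the same expansion shows $R^n$ is non-expanding on a small ball around $z_0$, which is therefore forward-invariant; then $\{(R^n)^k\}_k$ is uniformly non-expanding and hence equicontinuous, so $z_0\in F(R)$.

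Finally for part (2): when $R$ has good reduction, $\Oo$ is forward-invariant and $R|_\Oo$ is non-expanding in the chordal metric, giving $\Oo\subseteq F(R)$ directly. In the general case I would invoke the construction of Benedetto and Rivera-Letelier producing a ball $B$ on which some iterate $R^N$ is non-expanding with $R^N(B)\subseteq B$, which forces $B\subseteq F(R^N)=F(R)$. For the empty interior of $J(R)$, I would argue by contradiction: if $J(R)$ contained a ball $B$, total invariance from part (1) would yield $\bigcup_n R^n(B)\subseteq J(R)$, but $R$ is an open surjection of degree $d\geq 2$, so the forward orbit of $B$ omits at most an exceptional set of bounded size and must therefore meet the non-empty $F(R)$ just established, a contradiction. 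The main obstacle will be part (2) in the bad-reduction regime: securing a forward-invariant non-expanding ball requires appeal to the Benedetto/Rivera-Letelier machinery rather than a direct estimate, whereas parts (1) and (3) reduce to routine ultrametric calculus.
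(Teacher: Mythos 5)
The paper offers no proof of this proposition: it is stated as background with the remark that ``proofs can be found in \cite{BenT}'', so there is no in-paper argument to compare against. Judged on its own terms, your plan for parts (1) and (3) is sound and standard: openness of $R$ (not inverse branches, which you do not need and which are problematic at critical points anyway) gives forward invariance, the Lipschitz property of each fixed iterate gives $F(R^n)=F(R)$, and the ultrametric Taylor expansion at a periodic point cleanly separates the repelling case ($|\lambda|>1$ forces expansion by exactly $|\lambda|$ on a small ball) from the non-repelling case ($|\lambda|\le 1$ yields a forward-invariant ball on which $R^n$ is $1$-Lipschitz, hence all its iterates are).

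Part (2) is where your proposal has genuine gaps. First, for non-emptiness of $F(R)$ in the bad-reduction case you defer to unspecified ``Benedetto/Rivera-Letelier machinery''; the standard self-contained route is the rational fixed-point formula $\sum_i (1-\lambda_i)^{-1}=1$ (valid when no $\lambda_i=1$, with a separate easy case otherwise): if every fixed point were repelling then every summand would have norm $<1$, contradicting the ultrametric inequality, so some fixed point is non-repelling and part (3) places it in $F(R)$. You should make this explicit rather than cite a vague construction. Second, and more seriously, your justification for the empty interior of $J(R)$ is wrong as stated: the claim that ``$R$ is an open surjection of degree $d\ge 2$, so the forward orbit of $B$ omits at most an exceptional set of bounded size'' is false in general --- for a good-reduction map with a Siegel disk, a ball inside that disk has its entire forward orbit confined to the disk. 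Your justification nowhere uses that $B\subseteq J(R)$, so it cannot be sufficient. The correct input is Hsia's non-archimedean Montel theorem: if $\{R^n|_B\}$ fails to be equicontinuous (which holds precisely because $B\subseteq J(R)$), then $\bigcup_n R^n(B)$ omits at most one point of $\bbP(\bbC_p)$, and since $F(R)$ is non-empty and open it must be hit, contradicting total invariance. Without a Montel-type theorem this step does not go through.
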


A remarkable implication for a rational map with good reduction is the following.

\begin{theorem}[Morton \& Silverman, \cite{MS}]\label{thm:MS}
If $R\in \bbC_p(z)$ has good reduction, then $R$ does not expand the chordal metric. Hence $J(R)=\emptyset$.
\end{theorem}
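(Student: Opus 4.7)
The plan is to establish the stronger statement that $R$ is 1-Lipschitz with respect to $\rho$, from which $J(R)=\emptyset$ follows immediately, since iterated compositions of 1-Lipschitz maps remain 1-Lipschitz and hence the family $\{R^n\}$ is equicontinuous at every point of $\bbP(\bbC_p)$.

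For the Lipschitz estimate I would work in homogeneous coordinates. Write $R=[F,G]$ with $(F,G)$ a normalized pair of degree-$d$ forms. Given $P_i=[X_i,Y_i]\in\bbP(\bbC_p)$, choose \emph{unimodular} representatives, i.e.\ representatives with $\max\{|X_i|,|Y_i|\}=1$; one can always do this by scaling. With such representatives the chordal distance takes the streamlined form $\rho(P_1,P_2)=|X_1Y_2-X_2Y_1|$.

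The first step is to show that the image representatives $(F(X_i,Y_i),G(X_i,Y_i))$ are again unimodular. The bound $\max\{|F(X_i,Y_i)|,|G(X_i,Y_i)|\}\le 1$ is automatic from $F,G\in\Oo[X,Y]$ and the non-archimedean inequality. For the reverse bound I would use the resultant identity: there exist forms $A,B,C,D\in\Oo[X,Y]$ of degree $d-1$ with
\begin{align*}
AF+BG &= \mathrm{Res}(F,G)\cdot X^{2d-1},\\
CF+DG &= \mathrm{Res}(F,G)\cdot Y^{2d-1}.
\end{align*}
Good reduction is equivalent to $\tilde F$ and $\tilde G$ having no common zero in $\bbP(\tilde\bbC_p)$, which is in turn equivalent to $|\mathrm{Res}(F,G)|=1$. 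Evaluating the identities at a unimodular $(X,Y)$ and applying the ultrametric inequality yields $\max\{|F|,|G|\}\ge 1$, giving equality.

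The second step is a polynomial identity. One checks that $F(X_1,Y_1)G(X_2,Y_2)-F(X_2,Y_2)G(X_1,Y_1)$ is divisible, as a polynomial in $\bbZ[a_k,b_k,X_1,Y_1,X_2,Y_2]$, by $X_1Y_2-X_2Y_1$; this reduces to the monomial identity that each $X_1^{d-j}Y_1^{j}X_2^{d-k}Y_2^{k}-X_2^{d-j}Y_2^{j}X_1^{d-k}Y_1^{k}$ is so divisible, verified by writing the monomial as a telescoping sum or by noting that the left side vanishes whenever $X_1Y_2=X_2Y_1$. Since the quotient $H$ has coefficients in $\Oo$, unimodularity of the $(X_i,Y_i)$ forces $|H(X_1,Y_1,X_2,Y_2)|\le 1$. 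Combining with the normalization of the image representatives from Step 1 gives
$$\rho(R(P_1),R(P_2))=|F(X_1,Y_1)G(X_2,Y_2)-F(X_2,Y_2)G(X_1,Y_1)|\le|X_1Y_2-X_2Y_1|=\rho(P_1,P_2).$$

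The main obstacle, and the only place where good reduction is genuinely used, is Step 1: showing that the unit-resultant condition lets one propagate unimodularity through $R$. Once that is in place the rest is algebraic bookkeeping with the non-archimedean inequality, and the Julia-emptiness conclusion is then formal from Proposition~\ref{prop:JF} since equicontinuity holds at every point.
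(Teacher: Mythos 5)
Your argument is correct. The paper does not prove this statement at all --- it is quoted from Morton and Silverman \cite{MS} --- and what you have written is precisely the standard proof from that reference (see also Silverman \cite{Sil}, Theorem 2.17): unimodular representatives are propagated through $R$ via the resultant identities $AF+BG=\mathrm{Res}(F,G)X^{2d-1}$, $CF+DG=\mathrm{Res}(F,G)Y^{2d-1}$ together with $|\mathrm{Res}(F,G)|=1$, and the cross-term $F_1G_2-F_2G_1$ is controlled by the divisibility of the monomial differences by $X_1Y_2-X_2Y_1$ over $\Oo$. Both steps are sound, and the passage from the $1$-Lipschitz bound to $J(R)=\emptyset$ is immediate from the definition of the Fatou set as the equicontinuity locus.
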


\subsubsection{Rational maps acting on geometric objects}

In order to study the dynamics of a rational map, it is essential to understand its action  on some of the geometric objects previously defined. Proofs of the following results can be found in Section 2 in \cite{RL} and Chapter 10 in \cite{BR}.

Let $\p$ be a fixed end with associated disk $D_\p$ and ball $B_\p$. Consider a vanishing chain representative $\{A_i\}_{i\geq 1}$ in the equivalence class $\p$ and let $\s=\{\p_w\}_{w\in \bbP(\tilde \bbC_p)}$ be the projective system associated to $\p$. We shall also refer to $\s$ as the projective system associated to $D_\p$ or to $B_\p$.

\begin{proposition}[Action on ends]\label{prop:RonEnds}
Let $R\in \bbC_p(z)$ of degree $d\geq 2$. Given $\p$, $D_\p$ and $\{A_i\}_{i\geq 1}$ as above,
\begin{enumerate}
\item there exists $m\geq 1$ such that, for any $\{A_i\}_{i\geq 1}$ and for $i$ sufficiently large, $R:A_j\to R(A_j)$ has degree $m$ for $j\geq i$.
In this case, the \emph{local degree} of $R$ in $\p$ is $\deg_R(\p)=m$.
\item $R(D_\p)=\bbP(\bbC_p)$ or $R:D_\p\to D_{R(\p)}$ with degree $\deg_R(\p)$.
\end{enumerate}
\end{proposition}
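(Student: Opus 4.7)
The plan is to combine the finiteness of the critical set of $R$ (at most $2d-2$ points) with the basic structure theorem for non-archimedean rational maps on connected affinoids: such a map sends a disk to either another disk or to $\bbP(\bbC_p)$, and when the disk avoids the critical set, the restriction is unramified with a well-defined constant degree. These ingredients, documented in \cite{BR} and \cite{RL}, allow me to stabilize the local degree along a vanishing chain and to identify the image end.

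For part (1), I would fix a vanishing chain $\{A_i\}_{i\geq 1}$ representing $\p$, coming from increasing balls $\{B_i\}$ with $\bigcup B_i=B_\p$. Since $\mathrm{Crit}(R)$ is finite, there exists $i_0$ such that $B_{i_0}$ contains every critical point of $R$ lying inside $B_\p$; for $j\geq i_0$ the affinoid $A_j=B_\p\setminus B_j$ is free of critical points. The structure theorem then gives a well-defined degree $m_j\in\{1,\dots,d\}$ for $R:A_j\to R(A_j)$. Shrinking from $A_j$ to $A_{j+1}\subset A_j$ without crossing any critical points can only lose preimages of a generic target, so $\{m_j\}$ is nonincreasing and stabilizes to some $m\geq 1$. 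Independence from the representative vanishing chain follows because any two equivalent chains admit a common cofinal refinement on which the stabilized values must agree; hence $m=\deg_R(\p)$ is a well-defined invariant of $\p$.

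For part (2), I would examine the increasing sequence $\{R(B_j)\}$. Each image is either $\bbP(\bbC_p)$ or a ball, and the sequence is nondecreasing. If $R(B_{j_0})=\bbP(\bbC_p)$ for some $j_0$, then a direct fiber count using the surjectivity of $R$ yields $R(D_\p)=\bbP(\bbC_p)$. Otherwise $\{R(B_j)\}$ is an increasing chain of balls whose union $B'$ is either a ball/disk or $\bbP(\bbC_p)$; the complementary chain $\{\bbP(\bbC_p)\setminus R(B_j)\}$ then defines an end, which I name $R(\p)$, with associated disk $D_{R(\p)}=\bbP(\bbC_p)\setminus B'$. Applying the structure theorem directly to the disk $D_\p$ yields that $R(D_\p)$ is itself a disk or $\bbP(\bbC_p)$; a comparison with the chain $\{R(B_j)\}$ identifies it with $D_{R(\p)}$. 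The degree of $R:D_\p\to D_{R(\p)}$ is then computed by a generic fiber count: for $j\geq i_0$, the total degree $d$ decomposes over the disjoint regions $B_j$, $A_j$, and $D_\p$, and the stability from part (1), together with the analogous stability of the contribution from $B_j$, forces $\deg(R|_{D_\p})=m=\deg_R(\p)$.

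The main obstacle is making the fiber accounting in part (2) rigorous, in particular ensuring that preimages do not migrate between $A_j$ and $D_\p$ as $j$ grows, and that the image of $D_\p$ really fills $D_{R(\p)}$. The critical-point argument from part (1) freezes such migration once $j\geq i_0$, while the non-archimedean maximum principle and the specific form of the image of a disk under $R$ (always a disk or $\bbP(\bbC_p)$) are what make the identification $R(D_\p)=D_{R(\p)}$ clean; once these are in place, the degree statements reduce to counting multiplicities.
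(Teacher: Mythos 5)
The paper itself gives no proof of this proposition: it is quoted from Section~2 of \cite{RL} and Chapter~10 of \cite{BR}, so your attempt can only be measured against the standard arguments there. Your part~(1) is broadly in that spirit (finiteness of the critical set, stabilization of a nonincreasing sequence of positive integers, cofinality of equivalent chains), with the caveat that ``$A_j$ contains no critical points'' does not by itself give a well-defined constant degree on an annulus; one also needs $R(A_j)$ to be an annulus and $R\colon A_j\to R(A_j)$ to be finite of constant fibre cardinality, which holds only for $j$ large and is usually extracted from the Newton-polygon description of $|R|$ on annuli avoiding the zeros and poles of $R$. That is a fixable omission. Part~(2), however, contains a genuine error: you define the image end and detect the dichotomy through the balls $B_j$, i.e.\ through $R(B_\p)$, when the statement is about $R(D_\p)$, and these two regions are not complementary --- a point of $\bbP(\bbC_p)$ can perfectly well have preimages in both $B_\p$ and $D_\p$, so $\bbP(\bbC_p)\setminus\bigcup_j R(B_j)$ need not equal $R(D_\p)$, and ``no $R(B_{j_0})$ is all of $\bbP(\bbC_p)$'' does not rule out $R(D_\p)=\bbP(\bbC_p)$.

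Concretely, take $p$ odd, $R(z)=z+1/z$, and the end $\p$ given by $B_j=B_{r_j}(0)$ with $r_j\uparrow 1$, so that $B_\p=\{z~|~|z|<1\}$ and $D_\p=\{z~|~|z|\geq 1\}\cup\{\infty\}$. For $0<|z|\leq r_j$ one has $|R(z)|=1/|z|$, so $R(B_j)=\{w~|~|w|\geq 1/r_j\}\cup\{\infty\}$ is a ball for every $j$; your procedure therefore selects the second alternative and predicts $R(D_\p)=\bbP(\bbC_p)\setminus\bigcup_j R(B_j)=\{w~|~|w|\leq 1\}$. But for $|w|\leq 1$ both roots of $z^2-wz+1=0$ have absolute value $1$, and for $|w|>1$ one root has absolute value $|w|$, so in fact $R(D_\p)=\bbP(\bbC_p)$: the first alternative holds even though no $R(B_j)$ fills $\bbP(\bbC_p)$ (here $\deg_R(\p)=1$, since $R(A_j)=\{w~|~1<|w|<1/r_j\}$ with one preimage in $A_j$). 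The correct route, as in \cite{RL}, defines $R(\p)$ by the vanishing chain $\{R(A_j)\}$ for $j$ large, and settles the dichotomy by analyzing $R$ on the decreasing disks $\bbP(\bbC_p)\setminus B_j=A_j\sqcup D_\p$ directly, using the fact that the image of a disk is a disk or all of $\bbP(\bbC_p)$ together with a fibre count over points of $R(A_j)$; the criterion for $R(D_\p)=\bbP(\bbC_p)$ is whether $D_\p$ itself captures a full fibre over some point, not whether some $R(B_j)$ does.
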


The following result states when two rational functions act exactly the same over ends. Its proof can be found in \cite[Lemma 2.3]{RL}.
 
\begin{proposition}\label{prop:RQ}
Consider two rational functions $R,Q\in \bbC_p(z)$ and an end $\p$ with vanishing chain representative $\{A_i\}_{i\geq 1}$.  Assume $B_{R(\p)}$ is of the form $\{z~|~|z-z_0|\leq r\}$. If  there exists $i_0\geq 1$ such that
$$|Q(z)-R(z)|\leq r~~~\text{for all}~~z\in A_{i_0},$$
then $Q(\p)=R(\p)$ and $\deg_Q(\p)=\deg_R(\p)$.
\end{proposition}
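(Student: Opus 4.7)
The plan is to apply Proposition~\ref{prop:RonEnds} to both $R$ and $Q$ near the end $\p$ and use the non-archimedean triangle inequality to force $Q$ to act on $\p$ exactly as $R$ does. The hypothesis $|Q(z)-R(z)|\leq r$ on $A_{i_0}$, where $r$ is precisely the radius of $B_{R(\p)}$, is tuned so that any deviation of $Q$ from $R$ is absorbed by the ambient scale of the target ball.

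First I would invoke Proposition~\ref{prop:RonEnds}(1) applied to $R$: there exists $i_1\geq 1$ such that for every $i\geq i_1$, the restriction $R\colon A_i\to R(A_i)$ has constant degree $m:=\deg_R(\p)$, and $\{R(A_i)\}_{i\geq i_1}$ is a vanishing chain representing $R(\p)$. In particular, for $i$ sufficiently large, $R(A_i)\subset B_{R(\p)}$, i.e., $|R(z)-z_0|\leq r$ for all $z\in A_i$. Then, for $i\geq \max\{i_0,i_1\}$ and $z\in A_i$, combining the hypothesis with $|R(z)-z_0|\leq r$ via the ultrametric inequality gives $|Q(z)-z_0|\leq r$, so $Q(A_i)\subset B_{R(\p)}$.

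Applying Proposition~\ref{prop:RonEnds} now to $Q$, the sets $\{Q(A_i)\}$ form, for $i$ large, a vanishing chain representing $Q(\p)$ of constant degree $\deg_Q(\p)$, and this chain sits inside $B_{R(\p)}$. I would then argue, using that $r$ matches the radius of $B_{R(\p)}$ exactly, that the two vanishing chains $\{R(A_i)\}$ and $\{Q(A_i)\}$ must be cofinal, yielding $Q(\p)=R(\p)$; the degree equality $\deg_Q(\p)=m$ would follow either by counting preimages of a generic point in the common image, or via a $p$-adic Rouch\'e-type argument showing that the perturbation $Q-R$ cannot alter the covering degree as long as it remains within the target scale. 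The main obstacle I foresee is precisely this last step: promoting the containment $Q(A_i)\subset B_{R(\p)}$ to an actual equality of ends (not merely of associated balls) and a matching of degrees, which is where the fact that $r$ is the exact radius of $B_{R(\p)}$ has to be used delicately to rule out any direction or multiplicity perturbation introduced by $Q-R$.
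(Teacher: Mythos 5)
First, a remark on scope: the paper does not prove this proposition at all --- it is quoted verbatim from Rivera-Letelier (\cite[Lemma 2.3]{RL}) --- so your attempt can only be judged on its own merits, and on its own merits it has a genuine gap, one you yourself flag at the end. The gap is not a technicality: the containment $Q(A_i)\subset B_{R(\p)}$ is strictly too weak to identify the end. A ball of radius $r$ contains uncountably many distinct ends (one for every proper sub-ball, approached from either side), so knowing that the vanishing chain $\{Q(A_i)\}$ eventually sits inside $B_{R(\p)}$ says nothing about which of those ends it converges to; cofinality with $\{R(A_i)\}$ simply does not follow. Concretely, a map $Q$ that contracts $A_i$ toward an interior point of $B_{R(\p)}$ would satisfy your derived containment while having $Q(\p)\neq R(\p)$, so any correct proof must use the hypothesis $|Q-R|\le r$ in a sharper way than ``absorbed by the ambient scale.''

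The sharper way requires getting the geometry right, and here your setup is inverted. The annuli $R(A_i)$ of the vanishing chain representing $R(\p)$ do not lie \emph{inside} $B_{R(\p)}=\{|w-z_0|\le r\}$; they lie just \emph{outside} it, in the complementary disk $D_{R(\p)}$, i.e.\ $R(A_i)\subset\{r<|w-z_0|<s_i\}$ with $s_i\downarrow r$ (this is how $B_\p$ is used throughout the paper, e.g.\ in Proposition~\ref{prop:RonEnds} and in the proof of Theorem~\ref{GRL}, where $B_\p$ is a small ball sitting inside the Siegel disk and the end is its boundary seen from outside). With that orientation one has the \emph{strict} inequality $|R(z)-z_0|>r\geq|Q(z)-R(z)|$ for all $z\in A_i$, and the isosceles property then yields the \emph{equality} $|Q(z)-z_0|=|R(z)-z_0|$, which pins $Q(z)$ into the very same standard annulus $\{r<|w-z_0|<s_i\}$ as $R(z)$. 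That equality --- not the one-sided bound $|Q(z)-z_0|\le\max\{r,r\}=r$ you derive --- is what forces the two vanishing chains to be equivalent and hence $Q(\p)=R(\p)$. The same strict inequality is also what makes the degree claim provable: for $w$ with $r<|w-z_0|<s_i$ one gets $|{(Q(z)-w)}-{(R(z)-w)}|\le r<|R(z)-w|$ on the boundary circles of $A_i$, so a non-archimedean Rouch\'e/Newton-polygon count shows $R(z)-w$ and $Q(z)-w$ have the same number of zeros in $A_i$, i.e.\ $\deg_Q(\p)=\deg_R(\p)$; your proposal only names this step as one of two possible strategies without carrying either out. In short: the missing ingredient is the isosceles equality $|Q(z)-z_0|=|R(z)-z_0|$, available only once you place the annuli on the correct side of $B_{R(\p)}$, and without it neither the equality of ends nor the equality of degrees can be reached.
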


Let us consider the action of $R$ over projective systems. It was shown in \cite[Proposition 2.4]{RL} that given a projective system $\s_0$, there exists a projective system $\s_1$ so that if $\p\in \s_0$ then $R(\p)\in \s_1$.  In this case, we write $R(\s_0)=\s_1$. Now, consider two projective systems, $\s_0$ and $\s_1$, that satisfy $R(\s_0)=\s_1$. We can always find two automorphisms $\psi_0,\psi_1\in \PGL(2,\Oo)$ that send each projective system to the canonical projective system, that is $\psi_i(\s_i)=\s_c$ for $i=0,1$. Then, the rational function
$$R_*=\psi_1\circ R\circ \psi^{-1}_0$$
sends the canonical projective system into itself. By \cite[Corollary 9.27]{BR}, $R_*$ has a well-defined, non-constant reduction $\tilde R_*$. We say  $\tilde R_*$ is the \emph{reduction of $R$ with respect to $\psi_0$ and $\psi_1$}. The degree of $R$ in $\s_0$ is defined by $\deg_R(\s_0)=\deg(\tilde R_*)$.

A key result related to the action of $R$ over projective systems is stated in the next lemma. Given a connected analytic space $X$, we say $\s$ is \emph{contained} in $X$, denoted by $\s\prec X$, if $X$ is not contained in a disk associated to the projective system $\s$.

\begin{lemma}[Lemma 2.11 in \cite{RL}]\label{211}
If a rational function $R\in \bbC_p(z)$ is injective over the connected analytic space $X$, then for all projective systems $\s\prec X$, we have $\deg_R(\s)=1$.
\end{lemma}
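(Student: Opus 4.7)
The plan is to argue by contrapositive: assume $\deg_R(\s)=m\geq 2$ and produce two distinct points of $X$ with the same image under $R$. First, by the definition of $\deg_R(\s)$, choose $\psi_0,\psi_1\in \PGL(2,\Oo)$ sending $\s$ and $R(\s)$, respectively, to the canonical projective system $\s_c$, so that $R_* = \psi_1\circ R\circ \psi_0^{-1}$ has a well-defined reduction $\tilde R_*$ of degree $m$. Since elements of $\PGL(2,\Oo)$ are isometries of $(\bbP(\bbC_p),\rho)$ which preserve injectivity, connected analytic spaces, and the relation $\prec$, we replace $R$ by $R_*$ and $X$ by $\psi_0(X)$. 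After this normalization, $\s=\s_c$, $R(\s_c)=\s_c$, $\tilde R$ is a rational map of degree $m\geq 2$ on $\bbP(\tilde \bbC_p)$, $R$ is injective on $X$, and $\s_c\prec X$.

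Next I exploit that $\tilde R$ is not injective: there exist distinct points $w_1,w_2\in \bbP(\tilde \bbC_p)$, which may be chosen non-critical for $\tilde R$, with $\tilde R(w_1)=\tilde R(w_2)=w$. Because $R_*$ has non-constant reduction at $\s_c$ and the $w_i$ are non-critical preimages of $w$, a Hensel-type lifting argument (implicit in Proposition~\ref{prop:RonEnds}) shows that for $i=1,2$ the map $R$ sends the residue-class disk $D_{\p_{w_i}}$ bijectively onto $D_{\p_w}$. Consequently, for each $z_0\in D_{\p_w}$ there are unique preimages $z_i\in D_{\p_{w_i}}$ with $R(z_i)=z_0$, and $z_1\neq z_2$ because they lie in different residue classes.

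It then remains to place both $z_1$ and $z_2$ inside $X$. Here the hypothesis $\s_c\prec X$ is used: since $X$ is a connected analytic space not contained in any single disk $D_{\p_v}$ of $\s_c$, its description as an increasing union of connected affinoids forces $X$ to meet infinitely many residue classes, and in particular to contain analytic pieces inside both $D_{\p_{w_1}}$ and $D_{\p_{w_2}}$. The bijections above take $X\cap D_{\p_{w_i}}$ onto analytic pieces of $D_{\p_w}$; by the nested/disjoint dichotomy for $p$-adic balls, together with the fact that the two pieces come from a common connected analytic space, their images in $D_{\p_w}$ must have nonempty intersection. Picking $z_0$ in this common image gives $z_1,z_2\in X$ with $R(z_1)=R(z_2)=z_0$, contradicting injectivity.

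The main obstacle is the last paragraph: upgrading the bare condition "$X$ is not contained in a disk of $\s_c$" into the concrete statement that $X$ has pieces in the two particular residue classes $D_{\p_{w_1}}$ and $D_{\p_{w_2}}$ whose images overlap in $D_{\p_w}$. This propagation from a global non-containment to local presence in specific directions depends crucially on $X$ being a \emph{connected} increasing union of affinoids, rather than just a topological subset; once this structural input is used, the conclusion $\deg_R(\s)=1$ follows from the contradiction above.
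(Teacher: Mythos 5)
The paper itself does not prove this lemma (it is quoted from Rivera-Letelier, Lemma 2.11 of \cite{RL}), so your argument can only be judged on its own terms. Your overall strategy --- contrapositive: a reduction of degree $m\geq 2$ yields two residue classes with a common image class, hence a failure of injectivity on $X$ --- is the right one, but the last paragraph, which you yourself flag as the main obstacle, contains two genuine gaps rather than a proof. First, ``$X$ meets infinitely many residue classes'' does not imply that $X$ meets the two \emph{specific} classes $D_{\p_{w_1}}$ and $D_{\p_{w_2}}$. What is actually needed is a structure statement: a connected affinoid $A\subseteq X$ containing points of two distinct classes of $\s_c$ is an intersection $\bigcap_{j=1}^N U_j$ in which each $U_j$ either contains every residue class or is the complement of a disk lying inside a single class, so $A$ (hence $X$) misses at most finitely many classes of $\s_c$. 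One must then choose $w$ so that its $\tilde R$-fiber avoids this finite excluded set, avoids the critical points of $\tilde R$, and avoids the finitely many classes $D_{\p_v}$ with $R(D_{\p_v})=\bbP(\bbC_p)$ (a case Proposition~\ref{prop:RonEnds} explicitly permits and which would break your ``bijective onto $D_{\p_w}$'' claim). Since $\tilde R$ has degree $m\geq 2$ and $\bbP(\tilde \bbC_p)$ is infinite, such a $w$ exists, but this counting step is absent from your write-up and the ``in particular'' is a non sequitur as written.

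Second, the mechanism you invoke for the overlap of the two images is wrong: the sets $R(X\cap D_{\p_{w_i}})$ are not balls, so the nested/disjoint dichotomy does not apply, and connectedness of $X$ is not what forces them to meet. The correct reason is that the same structure statement shows $X\cap D_{\p_{w_i}}$ contains a boundary annulus $\{z : s_i<|z-w_i|<1\}$ of the residue disk; because $R(\p_{w_i})=\p_w$ with local degree $1$, $R$ carries a cofinal part of a vanishing chain of $\p_{w_i}$ to a vanishing chain of $\p_w$, so each image contains a boundary annulus $\{y : t_i<|y-c_i|<1\}$ of $D_{\p_w}$; and any two boundary annuli of the same disk intersect (take $|y-c_1|=s$ with $\max(t_1,t_2,|c_1-c_2|)<s<1$, which forces $|y-c_2|=s$ as well). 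With these two repairs the contradiction with injectivity goes through; as it stands, the proof is incomplete precisely at the step you identified as the crux.
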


\subsubsection{Fatou components}

In~\cite{RLT}, Rivera-Letelier introduced three types of $p$-adic Fatou components where the action of a rational map becomes relevant. These are the (immediate) \emph{basins of attraction},  \emph{wandering components}, and components of the \emph{domain of quasiperiodicity}. For completeness, we provide their definitions and list some of their elementary properties, further details can be found in \cite[Section 3]{RL}.

\begin{definition}[Basins of attraction]
Assume $z_0\in \bbP(\bbC_p)$ is a super-attracting or an attracting periodic point of $R$ of period $n\geq 1$. The \emph{basin of attraction} of $z_0$ is given by the set
$${\mathcal U}(z_0)=\{z\in \bbP(\bbC_p)~|~\rho(R^{nk}(z),R^{nk}(z_0))\to 0~\text{as}~k\to \infty\},$$
and the set $\bigcup_{m\geq 0} {\mathcal U}(R^m(z_0))$ is the \emph{basin of the cycle} of $z_0$.  The analytic component of ${\mathcal U}(z_0)$ that contains $z_0$ is called the \emph{immediate basin of attraction} of $z_0$.
\end{definition}

Standard arguments employed in rational dynamics over $\bbP(\bbC)$ show that every basin of an attracting cycle lies in the Fatou set of $R$.

\begin{definition}[Wandering disks]
A disk $D\subset \bbP(\bbC_p)$ is \emph{wandering} under $R$ if $R^n(D)\cap R^m(D)=\emptyset$ for all integers $n>m\geq 0$.
\end{definition}

In \cite[Lemma 4.29]{RL} it is shown that every wandering disks must lie in the Fatou set. Moreover, if $\diam_\rho$ denotes the diameter with respect to the chordal metric, then
$$\liminf_{n\to \infty} \diam_\rho( R^n(D) )=0.$$

\begin{definition}[Domain of quasiperiodicity]
Let $R\in \bbC_p(z)$ be a rational function. The \emph{domain of quasiperiodicity} of $R$ is defined as the set
$$\E(R)=\{z\in \bbC_p~|~\exists n_j\to \infty~\text{so that}~\rho(R^{n_j},\text{Id})\rightrightarrows 0~\text{in a neighborhood of}~z\},$$
where $\rightrightarrows$ denotes uniform convergence with respect to the chordal metric. 
\end{definition}

The following proposition gathers some of the main properties of $\E(R)$ found in \cite[Proposition 3.9]{RL}.

\begin{proposition}\label{prop:ER}
The domain of quasiperiodicity of a rational function $R$ satisfies the following properties:
\begin{enumerate}
\item $\E(R)$ is open, $R(\E(R))= \E(R)$ and $\E(R^n)=\E(R)$ for all $n>1$.
\item $R$ is injective over $\E(R)$.
\item If $h\in \PGL(2,\Oo)$ and $G=h\circ R\circ h^{-1}$, then $\E(G)=h(\E(R))$.
\end{enumerate}
\end{proposition}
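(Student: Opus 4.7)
My plan is to address the three parts in sequence, relying on three basic tools: the chordal-isometry property of $\PGL(2,\Oo)$, the (local) uniform continuity of rational maps on $\bbP(\bbC_p)$ with respect to $\rho$, and a residue-class subsequence trick for comparing iterates of $R$ with iterates of $R^n$.

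For part (1), openness is immediate from the definition: if $z\in\E(R)$ is witnessed by an open neighborhood $U$ and a sequence $n_j\to\infty$ with $\rho(R^{n_j},\mathrm{Id})\rightrightarrows 0$ on $U$, then the same $U$ and $\{n_j\}$ witness every $z'\in U$, so $U\subseteq\E(R)$. For forward invariance $R(\E(R))\subseteq\E(R)$, the open set $V=R(U)$ contains $R(z)$, and the identity $R^{n_j}(R(u))=R(R^{n_j}(u))$ combined with the uniform continuity of $R$ transfers the convergence from $U$ to $V$. For the reverse inclusion $\E(R)\subseteq R(\E(R))$, I would use that for large $j$ the iterates $w_j=R^{n_j-1}(z)\in\E(R)$ by forward invariance applied $n_j-1$ times, while $R(w_j)=R^{n_j}(z)\to z$; an accumulation argument in the finite fiber $R^{-1}(z)$ combined with the openness of $\E(R)$ then produces the desired preimage of $z$ inside $\E(R)$. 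For $\E(R^n)=\E(R)$, the containment $\E(R^n)\subseteq\E(R)$ is immediate by taking the subsequence $\{n\cdot k_j\}$. For the reverse, given $\{n_j\}$ witnessing $z\in\E(R)$, I pass to a subsequence on which $n_j$ has a fixed residue modulo $n$; then the consecutive differences $\ell_k=n_{k+1}-n_k$ are positive multiples of $n$ tending to infinity, and $R^{n_{k+1}}=R^{\ell_k}\circ R^{n_k}$, together with uniform continuity, yields $R^{\ell_k}\rightrightarrows\mathrm{Id}$ on $U$.

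For part (2), the key local observation is that uniform convergence $R^{n_j}\rightrightarrows\mathrm{Id}$ on a neighborhood $U$ of $z$ forces $R|_U$ to be injective: if $w_1\ne w_2\in U$ satisfied $R(w_1)=R(w_2)$, then $R^{n_j}(w_1)=R^{n_j-1}(R(w_1))=R^{n_j-1}(R(w_2))=R^{n_j}(w_2)$, but these two sides converge respectively to $w_1$ and $w_2$, a contradiction. To upgrade to global injectivity of $R$ on $\E(R)$, suppose distinct $w_1,w_2\in\E(R)$ share an image $z=R(w_1)=R(w_2)$, which lies in $\E(R)$ by part~(1); combining the witnessing sequences at $w_1$, $w_2$ and $z$ via the residue-class/consecutive-difference trick of part~(1), one extracts a single sequence $\ell_k\to\infty$ for which $R^{\ell_k}\rightrightarrows\mathrm{Id}$ simultaneously on neighborhoods of both $w_i$, and the same cancellation $R^{\ell_k}(w_1)=R^{\ell_k}(w_2)$ then forces $w_1=w_2$.

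For part (3), the identity $G^n=h\circ R^n\circ h^{-1}$ combined with the fact that every $h\in\PGL(2,\Oo)$ acts as a chordal isometry of $\bbP(\bbC_p)$ gives $\rho(G^{n_j}(h(w)),h(w))=\rho(R^{n_j}(w),w)$, and analogously on open neighborhoods; hence the defining condition of $\E$ transfers bijectively under $h$, yielding $\E(G)=h(\E(R))$. The main technical obstacle lies in the reverse invariance $\E(R)\subseteq R(\E(R))$ of part~(1) and in coordinating subsequences across two distinct points in part~(2); both are handled by a combination of the residue-class/consecutive-difference trick for $\{n_j\}$ and the finiteness of preimages under rational maps, while part~(3) is essentially automatic once parts~(1) and~(2) are in hand.
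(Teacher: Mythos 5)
This proposition is quoted by the paper from \cite[Proposition 3.9]{RL} without proof, so there is no in-paper argument to compare against; judged on its own, your sketch has the right skeleton (openness, the cancellation $R^{n_j}(w_1)=R^{n_j}(w_2)$ for local injectivity, and the isometry argument for part (3), which is fine), but three of its load-bearing steps are asserted rather than proved, and as stated they do not go through. First, in the surjectivity step $\E(R)\subseteq R(\E(R))$, knowing that the points $w_j=R^{n_j-1}(z)\in\E(R)$ accumulate at some $y\in R^{-1}(z)$ plus ``openness of $\E(R)$'' does not give $y\in\E(R)$: an open set need not contain the accumulation points of a sequence inside it, and the neighborhoods witnessing $w_j\in\E(R)$ may shrink. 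Second, in $\E(R)\subseteq\E(R^n)$ you invoke ``uniform continuity'' to pass from $R^{n_{k+1}}=R^{\ell_k}\circ R^{n_k}$ to $R^{\ell_k}\rightrightarrows\mathrm{Id}$; but the maps $R^{\ell_k}$ have a priori unbounded Lipschitz constants, so continuity of the individual iterates cannot transfer the estimate from the points $R^{n_k}(x)$ to arbitrary points of $U$. Both steps are standardly repaired by the same missing lemma: for a sufficiently small ball $B\ni z$ contained in $\E(R)$ and $j$ large, $R^{n_j}$ maps $B$ \emph{bijectively onto} $B$ (a consequence of the mapping properties of power series on ultrametric balls). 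Surjectivity of $R^{n_j}$ on $B$ then yields $z=R\bigl(R^{n_j-1}(x)\bigr)$ with $R^{n_j-1}(x)\in\E(R)$, and bijectivity lets you write every $y\in B$ as $y=R^{n_k}(x)$, giving $\rho(R^{\ell_k}(y),y)=\rho(R^{n_{k+1}}(x),R^{n_k}(x))\to 0$ uniformly.

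The most serious gap is the global injectivity in part (2). The claim that one can ``extract a single sequence $\ell_k\to\infty$ for which $R^{\ell_k}\rightrightarrows\mathrm{Id}$ simultaneously on neighborhoods of both $w_1$ and $w_2$'' is exactly the crux, and no residue-class manipulation of two unrelated witnessing sequences $\{n_j\}$ and $\{m_j\}$ produces such a synchronization: the sets of consecutive differences of the two sequences need not share an infinite subset. The standard argument avoids synchronization entirely: take a small ball $B\ni z=R(w_1)=R(w_2)$ inside $\E(R)$ and disjoint balls $B_1\ni w_1$, $B_2\ni w_2$ with $R(B_i)=B$; using the bijectivity lemma one shows $R^{n_j-1}(B)=B_1$ and $R^{m_j-1}(B)=B_2$, so $B$ is periodic, its set of return times is $k\bbN$ for a single $k\geq 1$, and since $n_j\equiv m_j\equiv 0 \pmod k$ both $B_1$ and $B_2$ must equal $R^{k-1}(B)$, contradicting $B_1\cap B_2=\emptyset$. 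Without this (or an equivalent) input, part (2) beyond the local statement, and with it the two inclusions above, remain unproved.
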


Recall from Proposition~\ref{prop:RonEnds} that $R$ acts over a disk $D_\p$ by either sending it to all $\bbP(\bbC_p)$ or to $D_{R(\p)}$ with degree $\deg_R(\p)$. Lemma~4.21 in \cite{RL} establishes that, if $D_\p\subset\E(R)$ then $R:D_\p\to D_{R(\p)}$ acts with degree 1 and there exists $m\geq 1$ so that $R^m(D_\p)=D_\p$. In particular, $R^m|_{D_\p}$ does not expand the chordal metric, and thus $D_\p$ lies in $F(R^m)=F(R)$.

The final result of this section is interesting in its own right.

\begin{theorem}[Classification Theorem, \cite{RLT}]\label{thm:Classif}
The Fatou set of a rational function $R\in \bbC_p(z)$ consists of the following disjoint sets:
\begin{enumerate}
\item Basins of attraction.
\item $\bigcup_{n\geq 0} R^{-n}(\E(R))$.
\item The union of wandering disks that are not attracted by an attracting cycle.
\end{enumerate}
\end{theorem}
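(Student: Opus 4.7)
The plan is to take an arbitrary Fatou point $z_0$ and show it belongs to exactly one of the three families listed. Pick a closed disk $D\ni z_0$ with $D\subset F(R)$ on which the iterates $\{R^n\}$ form an equicontinuous family. By Proposition~\ref{prop:RonEnds}, each forward image $R^n(D)$ is either a disk or all of $\bbP(\bbC_p)$; equicontinuity at $z_0$ rules out the latter, so $\{R^n(D)\}_{n\geq 0}$ is a sequence of disks inside $F(R)$. The central dichotomy is whether these disks are pairwise disjoint. If they are, then $D$ is wandering in the sense of the definition and $z_0$ falls either in case (1), when its orbit is attracted to an attracting cycle, or in case (3) otherwise.

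Suppose instead that $R^n(D)\cap R^m(D)\neq \emptyset$ for some $n>m\geq 0$. By the nesting property of non-archimedean disks, one of the two contains the other, and iterating this observation one locates integers $m\geq 0$ and $k\geq 1$ such that $R^{m+k}(D)\subseteq R^m(D)$. Writing $D'=R^m(D)$, the map $R^k$ sends $D'$ into itself. I would next analyze $R^k|_{D'}$ via the reduction $\tilde R_*$ of a conjugate $R_*=\psi_1\circ R^k\circ \psi_0^{-1}$ that carries the projective system associated to $D'$ to the canonical one. If $\deg \tilde R_*\geq 2$, the nested sequence $(R^k)^j(D')$ shrinks to a point, which is then an attracting periodic point whose basin contains $z_0$, yielding case (1). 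If $\deg \tilde R_*=1$, Theorem~\ref{thm:MS} applied to $R_*$ shows the restriction is a non-expanding injection, so an equicontinuity-plus-compactness argument in $\bbP(\tilde\bbC_p)$ extracts a subsequence $R^{k n_j}\rightrightarrows \text{Id}$ on a neighborhood of $R^m(z_0)$. This places $R^m(z_0)\in\E(R)$, hence $z_0\in R^{-m}(\E(R))$ and we are in case (2).

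Disjointness of the three families is a formal consequence of Proposition~\ref{prop:ER} and the defining properties: $\E(R)$ is totally invariant, so its full preimage $\bigcup_{n\geq 0}R^{-n}(\E(R))$ cannot overlap with a basin of attraction (on which every orbit converges to a single periodic cycle, precluding a subsequence returning to the identity) nor with a wandering disk (whose iterates leave every fixed neighborhood by Lemma~4.29 of \cite{RL}). I expect the main obstacle to lie in the attracting-versus-quasiperiodic dichotomy inside the invariant disk $D'$: ensuring that the absence of an attracting periodic point forces $R^k$ to return arbitrarily close to the identity on a subsequence, rather than generating wandering behavior inside $D'$, requires a careful reading of $\deg_{R^k}(\p')$ in Proposition~\ref{prop:RonEnds} together with the non-expansion property of the reduction. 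Controlling the interaction between the degree of $\tilde R_*$ and the geometry of the nested iterates is where the proof concentrates its technical weight.
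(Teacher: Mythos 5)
First, note that the paper does not prove this statement: it is quoted verbatim from Rivera-Letelier's thesis \cite{RLT} and used as a black box, so there is no internal proof to compare against. Judged on its own terms, your sketch follows the standard strategy (forward images of a small disk are either pairwise disjoint or eventually nested, leading to the wandering/attracting/quasiperiodic trichotomy), but it has genuine gaps. The most serious one is the final step of the nested case: when the degree-one self-map $R^k|_{D'}$ is a non-expanding bijection, you claim that ``an equicontinuity-plus-compactness argument in $\bbP(\tilde\bbC_p)$'' extracts a subsequence $R^{kn_j}\rightrightarrows \mathrm{Id}$. But the residue field $\tilde\bbC_p=\overline{\bbF_p}$ is infinite and $\bbC_p$ is not locally compact, so no compactness is available; showing that an isometric bijection of a disk has iterates returning uniformly to the identity is precisely the hard content of Rivera-Letelier's quasiperiodicity theory (it rests on a careful analysis of the induced action on residue classes and on periodic points, not on an Arzel\`a--Ascoli-type extraction). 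As written, that step would fail.

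Two further problems. First, your appeal to Theorem~\ref{thm:MS} for the case $\deg\tilde R_*=1$ is a misapplication: Morton--Silverman requires \emph{good} reduction, i.e.\ $\deg\tilde R_*=\deg R_*$, which is exactly what fails when $\deg\tilde R_*=1<\deg R^k$; the non-expansion of a degree-one map of disks should instead come from the Schwarz-lemma-type estimate of Lemma~\ref{lem:rad} (or Proposition~\ref{prop:RonEnds}(2)). Second, the reduction from ``$R^n(D)\cap R^m(D)\neq\emptyset$'' to ``$R^{m+k}(D)\subseteq R^m(D)$'' silently discards the case $R^m(D)\subsetneq R^n(D)$, in which the iterated images form an \emph{increasing} chain of disks; one must argue separately (using the uniform diameter bound from equicontinuity) that this chain stabilizes or that its union is again an invariant disk to which the dichotomy applies. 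You also do not address the converse inclusions (that basins, $\bigcup_n R^{-n}(\E(R))$ and non-attracted wandering disks all lie in $F(R)$), though these are the easier half and are quoted elsewhere in the paper.
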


\section{Domains of Quasiperiodicity and  Reduction}\label{sec:EandRed}

In this section, we are solely interested in a rational function $R\in \bbC_p(z)$ acting on components of its domain of quasiperiodicity. The next result characterizes analytic components of $\E(R)$.

\begin{theorem}[Theorem 3 in \cite{RL}]\label{thm:3}
Let $R\in \bbC_p(z)$ be a rational function of degree $d\geq 2$ and let $C$ be an analytic component of $\E(R)$. Then $C$ is an open connected affinoid, that is,
\begin{equation}\label{eq:oca}
C=\bbP(\bbC_p) - \bigcup_{j=0}^m B_j,
\end{equation}
where $m\geq 0$ and each $B_j$ is a ball in $\bbP(\bbC_p)$. Moreover, each end $\p$ of $C$ (whose ball is associated to a  $B_j$) is periodic, and the degree of the induced action by the corresponding iterate of $R$ over the associated projective system is larger than one.
\end{theorem}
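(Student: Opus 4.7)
The plan is to use Proposition~\ref{prop:ER}(2) — injectivity of $R$ on $\E(R)$ (and hence on $C$) — together with Lemma~\ref{211} to constrain the shape of $C$, and then to extract the dynamical claims about each end.

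\emph{Shape of $C$.} I would first write $C$ as an increasing union $C=\bigcup_k C_k$ of connected affinoids $C_k=\bbP(\bbC_p)-\bigcup_{j=0}^{m_k}B_j^{(k)}$ with pairwise disjoint closed balls. The ends of $C$ correspond to nested chains $B_{j}^{(k)}\supset B_{j}^{(k+1)}\supset\cdots$ whose intersection is a nontrivial ball $B_j\subset\bbP(\bbC_p)-C$; the other chains shrink and get absorbed into $C$. To obtain the affinoid form of $C$, only finitely many chains may stabilize. By Lemma~\ref{211}, every projective system $\s\prec C$ satisfies $\deg_R(\s)=1$, so any system with $\deg_R(\s)>1$ must be of ``end type,'' i.e.\ must contain a disk that itself contains $C$. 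A counting argument on the reduction $\tilde R_*$ (with total degree bounded by $d=\deg R$ via a Riemann-Hurwitz-type estimate) then restricts the number of such end-type systems, yielding the required $C=\bbP(\bbC_p)-\bigcup_{j=0}^m B_j$.

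\emph{Periodicity of ends.} Since $R$ is injective on $\E(R)$, it permutes the analytic components bijectively and maps each end of $C$ to an end of $R(C)$. Combined with the end count from the previous step and the classification in Theorem~\ref{thm:Classif}, the forward orbit $\{R^k(C)\}_k$ is eventually periodic, so after replacing $R$ by an iterate I may assume $R(C)=C$. Then $R$ restricts to an injective self-map on the finite set of ends of $C$, hence to a permutation, and every end is periodic.

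\emph{Degree exceeds one on each end projective system.} Fix an end $\p$ of period $n$ with associated projective system $\s$, and suppose for contradiction $\deg_{R^n}(\s)=1$. Normalize via $\psi_0,\psi_1\in\PGL(2,\Oo)$ so that $\s$ becomes the canonical projective system and $\widetilde{(R^n)_*}$ is a Möbius transformation of $\bbP(\tilde\bbC_p)$. Since $C\subset\E(R^n)=\E(R)$, there is a subsequence $n_j\to\infty$ with $R^{nn_j}\to\mathrm{Id}$ uniformly on compact subsets of $C$; the corresponding reductions of the normalized iterates are Möbius maps and, combining the chordal control near the boundary of $D_\p$ with Proposition~\ref{prop:RQ} applied on an annulus adjacent to $B_\p$, one would extend the convergence $R^{nn_j}\to\mathrm{Id}$ from $C$ across $\partial D_\p$ into $B_\p$. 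This would place points of $B_\p$ in $\E(R)$, contradicting the fact that $B_\p$ is disjoint from the analytic component $C$.

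\emph{Main obstacle.} The crux is the finiteness of ends. One must cleanly distinguish between $\deg_R(\p)$, which by Proposition~\ref{prop:RonEnds}(1) equals $1$ for every end of $C$ (injectivity of $R$ on annuli $A_i\subset C$), and $\deg_R(\s)$ on the associated projective system, which the theorem asserts exceeds $1$; only the latter enters into the total-degree bound for $\tilde R_*$, and organizing this bookkeeping so as to produce a finite count of end-type systems is the geometric heart of the argument. The second delicate point is the final contradiction: transferring the chordal-distance convergence $R^{nn_j}\to\mathrm{Id}$ from $C$ to $B_\p$ via the degree-one reduction requires carefully matching local analytic behavior near $\partial D_\p$ with the combinatorial action encoded by $\widetilde{(R^n)_*}$.
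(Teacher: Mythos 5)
The paper does not actually prove this statement: it is imported verbatim as Theorem~3 of Rivera-Letelier's Ast\'erisque article \cite{RL} and used as a black box, so there is no internal proof to compare your attempt against. Judged on its own terms, your outline names the right ingredients --- injectivity of $R$ on $\E(R)$, Lemma~\ref{211}, a finiteness statement for projective systems of degree larger than one, and an ``extension past the end'' contradiction --- but the two steps you yourself flag as the crux are exactly the ones left unproved, and as written they contain genuine gaps.

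First, the finiteness of the ends of $C$ rests on ``a counting argument on the reduction $\tilde R_*$ \dots\ via a Riemann--Hurwitz-type estimate'' that is never formulated. What is actually needed is the theorem (separate in \cite{RL}, and nowhere available in the present paper) that a rational map of degree $d$ admits only finitely many projective systems $\s$ with $\deg_R(\s)>1$, via a non-archimedean Riemann--Hurwitz count on the tree of projective systems; moreover, this bound only controls the number of ends of $C$ once you already know that each end's system has degree $>1$ under some iterate, so your first and third steps are logically intertwined rather than sequential. Second, the periodicity of $C$ does not follow from Theorem~\ref{thm:Classif} as you suggest (the classification does not by itself exclude a wandering orbit of components of $\E(R)$); the direct route is that $R^{n_j}\to\mathrm{Id}$ near a point of $C$ forces $R^{n_j}(C)\cap C\neq\emptyset$ for large $j$, hence $R^{n_j}(C)=C$. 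Third, the final contradiction --- transporting $R^{nn_j}\to\mathrm{Id}$ across $\partial D_\p$ into $B_\p$ because $\widetilde{(R^n)_*}$ is M\"obius --- is only heuristic: Proposition~\ref{prop:RQ} compares two maps on a fixed annulus and does not by itself propagate equicontinuity into $B_\p$. Your sketch is a plausible roadmap consistent with Rivera-Letelier's strategy, but it is not a proof.
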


Following the terminology of Rivera-Letelier, we say that an open connected affinoid $C$ as in equation (\ref{eq:oca}) is a \emph{Siegel disk} if $m=0$; otherwise, $C$ is an \emph{$m$-Herman ring}. Thus, a $1$-Herman ring is an open connected affinoid of the form $\bbP(\bbC_p)-(B_0\cup B_1)$. 

\begin{remark}
It is worth noting that the dynamics of $R$ restricted to either a Siegel disk or an $n$-Herman ring in the $p$-adic setting is very different from the dynamics in $\bbP(\bbC)$, as each analytic component in $\E(R)$ contains an infinite number of indifferent periodic points (see \cite[Corollary 5.3]{RL}). Thus, a $p$-adic Siegel disk and a $p$-adic Herman ring do not longer correspond to the largest domains of linearization, as expected for rational dynamics over $\bbP(\bbC)$.
\end{remark}

For further reference, we state the following result regarding the existence of fixed points in the interior of a Siegel disk.

\begin{proposition}[Corollary 5.11 in \cite{RL}]\label{511}
If $D$ is a fixed Siegel disk, then $D$ contains at least one fixed point.
\end{proposition}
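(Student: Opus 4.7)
The plan is to locate a root of $R(z)-z$ inside $D$ by expanding $R$ as a power series on $D$ and controlling its Newton polygon, using both the non-archimedean constraints on an injective self-map of a disk and the hypothesis that the induced action on the associated projective system has degree at least two.

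First, I would normalize. By Theorem~\ref{thm:3}, a fixed Siegel disk is a genuine open disk, and after conjugating $R$ by a suitable Möbius transformation I may assume $D=\{z:|z|<1\}$ with the end $\p$ of $D$ realized as the canonical end; in these coordinates, the reduction computing $\deg_R(\s)$ coincides with the ordinary reduction $\tilde R$ of the rational function. By Proposition~\ref{prop:ER}(2), $R$ is injective on $\E(R)$, so $R|_D:D\to D$ is a bijective analytic self-map. The standard non-archimedean description of injective analytic self-maps of a disk, applied to the Taylor expansion $R(z)=\sum_{n\geq 0}a_n z^n$, then yields $|a_0|<1$, $|a_1|=1$, and $|a_n|\leq 1$ for all $n\geq 2$. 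If $a_0=0$, then $0\in D$ is already a fixed point, so I assume $a_0\neq 0$.

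Next, I would examine the power series $h(z):=R(z)-z=\sum_{n\geq 0}b_n z^n$, with $b_0=a_0$, $b_1=a_1-1$, and $b_n=a_n$ for $n\geq 2$, and write $v=-\log_p|\cdot|$. The Newton polygon of $h$ begins at the vertex $(0,v(a_0))$ with $v(a_0)>0$, and a root of $h$ with $|z|<1$ is produced as soon as one exhibits an index $k\geq 1$ with $v(b_k)=0$: the leftmost segment then has negative slope, so the general theory of Newton polygons gives at least one root $z^\ast$ with $|z^\ast|<1$, i.e.\ a fixed point of $R$ in $D$. If $|a_1-1|=1$ we may take $k=1$; this is essentially a Hensel-type argument placing the required root near $-a_0/(a_1-1)$. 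The delicate case is $|a_1-1|<1$, equivalently $\tilde a_1=1$: here the linear term of $h$ contributes nothing to the left end of the polygon, and I would invoke the hypothesis $\deg_R(\s)\geq 2$. In canonical coordinates this says that $\tilde R$ is a rational function on $\bbP(\tilde\bbC_p)$ of degree at least two fixing $0$. If $\tilde a_n$ vanished for every $n\geq 2$, then, combined with $\tilde a_0=0$ and $\tilde a_1=1$, the Taylor expansion of $\tilde R$ at $0$ would coincide with $z$, and by the identity principle for rational functions this would force $\tilde R\equiv z$ globally, contradicting $\deg(\tilde R)\geq 2$. Hence some $|a_n|=1$ with $n\geq 2$, providing the required index $k$ and the desired fixed point.

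The principal obstacle is precisely this tangent case: once the linear part of $h$ degenerates, the only leverage for producing a negative slope in the Newton polygon comes from the degree hypothesis on the reduction, and translating that global hypothesis into a concrete higher-order Taylor coefficient requires the identity principle for rational functions together with a check that the initial Möbius normalization is compatible with the reduction framework set up earlier in the paper.
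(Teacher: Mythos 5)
The paper does not prove this proposition: it is quoted verbatim from Rivera-Letelier (\cite[Corollaire 5.11]{RL}) and used as a black box, so there is no in-paper argument to compare yours against. Judged on its own, your Newton-polygon proof is correct and all the genuinely delicate points are the ones you flag. The normalization is fine (a fixed Siegel disk is $\bbP(\bbC_p)\setminus B_0$ with $B_0$ a ball of radius in $|\bbC_p|$, and its end and projective system are fixed, so $\psi_0=\psi_1=\mathrm{id}$ computes $\deg_R(\s)=\deg(\tilde R)>1$ via Theorem~\ref{thm:3}); the coefficient bounds $|a_0|<1$, $|a_1|=1$, $|a_n|\le 1$ follow from $R(D)=D$ and the isometry statement of Lemma~\ref{lem:rad}; and the Weierstrass/Newton-polygon count does produce a root of $R(z)-z$ in $\{|z|<1\}$ as soon as some $|b_k|=1$ with $k\ge 1$.

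The one step you assert rather than prove --- that $\sum \tilde a_n z^n$ is the Taylor expansion of $\tilde R$ at $0$, so that $\tilde a_0=0$, $\tilde a_1=1$, $\tilde a_n=0$ ($n\ge 2$) would force $\tilde R=z$ --- does hold, but it needs an argument, since the $a_n$ need not tend to $0$ and a priori $\sum\tilde a_n z^n$ is just a formal series over $\tilde\bbC_p$. Write $R=f/g$ with $(f,g)$ normalized. Nonconstancy of the reduction rules out $\tilde f=0$ or $\tilde g=0$; and $\tilde g(0)=0$ is impossible, because $\tilde f(0)=\tilde g(0)=0$ would make $\p_0$ a direction mapped onto all of $\bbP(\bbC_p)$, while $\tilde g(0)=0\ne\tilde f(0)$ would give $|R(z)|=|f(z)|/|g(z)|>1$ on $D$; both contradict $R(D)=D$. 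Hence $|g(0)|=1$, so $1/g$ expands at $0$ with coefficients in $\Oo$ whose reductions are the Taylor coefficients of $1/\tilde g$, and multiplying by $f$ gives $\sum\tilde a_n z^n=\tilde f/\tilde g$ as a power series identity; the identity principle then yields $\tilde f=z\tilde g$ and $\deg\tilde R=1$, the contradiction you want. Two cosmetic remarks: the Möbius map taking $D$ to the unit disk is generally not in $\PGL(2,\Oo)$, so Proposition~\ref{prop:ER}(3) does not literally apply, but any fixed Möbius conjugacy preserves the properties you use (fixed Siegel disk, fixed point), so nothing is lost; and $|a_1|=1$ is never actually needed, since in your first case $|a_1-1|=1$ suffices and in the tangent case it is automatic.
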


\subsection{Good reduction}

Corollary 4.34 in \cite{RL} states that if $R\in \bbC_p(z)$ has good reduction and $\E(R)\neq \emptyset$, then every analytic component $C\subset \E(R)$ is a Siegel disk. In analogy to rational dynamics in $\bbP(\bbC)$, it is then natural to consider in our setting how to construct a rational function with an $n$-cycle of Herman rings starting from a rational function with an $n$-cycle of Siegel disks.~In \cite{Sh}, M.~Shishikura provided an elegant solution to this problem using quasiconformal surgery. The following result by Rivera-Letelier provides a clever answer in the $p$-adic setting without applying quasiconformal techniques.

\begin{theorem}[Proposition 6.7 in \cite{RL}]\label{thm:GR}
Let $R\in \bbC_p(z)$ with good reduction, $r\in |\bbC_p|$ so that $r<1$, and let $\p_1, \ldots, \p_n$ be ends so that each $B_{\p_i}\subset \E(R)$ has radius $r$. Then, there exists $Q\in \bbC_p(z)$ with $\deg(Q)=\deg(R)+n$, so that for every analytic component $C$ of $\E(R)$, the open connected affinoid
$$C-\cup_{T}B_{\p},$$
with $T=\{R^j(\p_i)~|~1\leq i\leq n, j\geq 1\}$, is an analytic component of $\E(Q)$.
\end{theorem}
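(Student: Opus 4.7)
\emph{Setup and strategy.} The plan is to construct $Q$ as a perturbation of $R$ that adds one simple pole inside each cycle of the targeted ends. Since $R$ has good reduction, Corollary~4.34 of \cite{RL} gives that every analytic component of $\E(R)$ is a Siegel disk of the form $\bbP(\bbC_p) \setminus B_0$, and Proposition~\ref{prop:ER}(2) says $R$ is injective on $\E(R)$. Hence each orbit $\{R^j(\p_i)\}_{j\ge 0}$ is periodic of some period $n_i$. Relabeling if necessary, I may assume the $\p_i$ represent distinct $R$-cycles, so that $T$ decomposes into $n$ disjoint finite cycles, and each ball $B_\p$ with $\p \in T$ is mapped bijectively and isometrically onto $B_{R(\p)}$ by $R$.

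\emph{Construction of $Q$ and degree count.} For each $i$, pick $a_i \in B_{\p_i}$ and set
\[
Q(z) = R(z) + \sum_{i=1}^n \frac{c_i}{z - a_i},
\]
with $c_i \in \bbC_p \setminus \{0\}$ and $|c_i|$ chosen small enough that, for every $z$ outside $\bigcup_i B_{\p_i}$, the estimate $|Q(z) - R(z)| \le \max_i |c_i|/r$ is dominated by every radius relevant to applying Proposition~\ref{prop:RQ} at the ends of components of $\E(R)$. Since each $a_i \in \E(R)$ is a Fatou point and thus not a pole of $R$, the $n$ newly introduced simple poles are distinct from the poles of $R$, and therefore $\deg(Q) = \deg(R) + n$.

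\emph{Quasiperiodicity of $Q$ on $C'$.} Let $C = \bbP(\bbC_p) \setminus B_0$ be an analytic component of $\E(R)$, and define $C' := C \setminus \bigcup_{\p \in T \cap C} B_\p$. Its ends are the outer end associated to $B_0$ and the inner ends $\p$ for $\p \in T \cap C$. Applying Proposition~\ref{prop:RQ} on vanishing chains near each end, the smallness of $|Q-R|$ gives that $Q$ sends each end of $C'$ to the same target as $R$, so every end of $C'$ is periodic under $Q$. Because $R$ is non-expansive under good reduction (Theorem~\ref{thm:MS}), the uniform smallness of $|Q - R|$ on $C'$ propagates through iteration, giving $|Q^k - R^k|$ uniformly small on $C'$ for all $k$. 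Along any subsequence $k_j$ with $R^{k_j} \to \mathrm{Id}$ uniformly on $C$ (which exists since $C \subset \E(R)$), one obtains $Q^{k_j} \to \mathrm{Id}$ uniformly on $C'$, so $C' \subset \E(Q)$.

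\emph{Maximality and main obstacle.} To conclude that $C'$ is itself an analytic component of $\E(Q)$ (and not strictly contained in a larger one), Theorem~\ref{thm:3} reduces the question to verifying that the iterate $Q^{n_i}$ acts on the projective system $\s$ of each inner end $\p_i$ with degree strictly greater than one. Here the added pole is decisive: after a $\PGL(2,\Oo)$-normalization that sends the Gauss point of $B_{\p_i}$ to the canonical Gauss point, the rescaled $Q_*$ inherits a genuinely new factor in its reduction coming from $c_i/(z-a_i)$, which raises the projective-system degree past one. Multiplicativity of the projective-system degree along the cycle then forces $\deg_{Q^{n_i}}(\s) > 1$. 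The main obstacle of the proof is the delicate tuning of the $|c_i|$: they must be small enough that the perturbation is invisible to every end outside $B_{\p_i}$ (so that the dynamics on all other components of $\E(R)$, and on all other ends of $C'$, is unchanged), yet substantial enough that after the coordinate normalization the added pole actually produces a non-trivial new factor in the reduction at the Gauss point of $B_{\p_i}$. Calibrating this balance in terms of $r$ and the radii of the other distinguished balls is the technical heart of the argument.
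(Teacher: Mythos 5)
First, note that the paper does not actually prove this statement: Theorem~\ref{thm:GR} is imported verbatim from Rivera-Letelier, and the paper only records that the proof rests on Proposition~\ref{prop:RQ} and Proposition~\ref{prop:P5.2}. The intended method is, however, fully visible in the proof of Theorem~\ref{GRL}, and your outline matches it in broad strokes: perturb $R$ by introducing one new pole inside each distinguished ball (you use the additive form $R(z)+\sum c_i/(z-a_i)$ where the paper's bad-reduction analogue uses a multiplicative factor $(z-z_0)/(z-z_0-\mu)$; for good reduction, where $R$ acts isometrically and all the balls $B_{R^j(\p_i)}$ keep radius $r$, the additive form is a reasonable choice), verify via Proposition~\ref{prop:RQ} that $Q$ and $R$ agree on ends away from the new poles, and detect the extra pole in the reduction at the Gauss point of each $B_{\p_i}$, propagating $\deg>1$ around the cycle by multiplicativity.

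There are two genuine gaps. First, your argument that $C'\subset\E(Q)$ does not work: from $|Q-R|\le\varepsilon$ on $C'$ with $\varepsilon$ \emph{fixed} (determined by the $c_i$), the non-archimedean telescoping only yields $|Q^k-R^k|\le\varepsilon$ for all $k$, hence $\rho(Q^{k_j},\mathrm{Id})\le\varepsilon$ along your subsequence --- not $\rho(Q^{k_j},\mathrm{Id})\rightrightarrows 0$, which is what membership in $\E(Q)$ requires. The correct route, and the one the paper uses for Theorem~\ref{GRL}, is the first part of Proposition~\ref{prop:P5.2}: use the end analysis to show that the appropriate iterate of $Q$ maps $C'$ to itself with degree $1$, and conclude $C'\subset\E(Q)$ from that. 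Second, you explicitly defer "the technical heart" --- the calibration of $|c_i|$ --- and this is precisely the step that cannot be waved at: if $|c_i|<r^2$ the term $c_i/(z-a_i)$ reduces to $0$ after the normalization $w=(z-a_i)/r$, so $\tilde Q_*=\tilde R_*$ has degree $1$ and $C'$ fails to be a maximal component; if $|c_i|>r^2$ the perturbation is no longer dominated by $r$ near the boundary of $B_{\p_i}$ and Proposition~\ref{prop:RQ} breaks down there. The proof needs the exact choice $|c_i|=r^2$ together with the explicit computation $Q_*(w)=R_*(w)+(c_i/r^2)w^{-1}$ showing $\deg(\tilde Q_*)=2$ (the analogue of the paper's computation $Q_*=(w/(w-\mu/r))R_*(w)$). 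Two smaller points: periodicity of the ends $R^j(\p_i)$ does not follow from injectivity of $R$ on $\E(R)$ alone (it follows from quasiperiodicity, since $\rho(R^{n_j},\mathrm{Id})<r$ eventually forces $R^{n_j}(B_{\p_i})=B_{\p_i}$), and your reduction to "distinct cycles" silently alters the statement, though harmlessly.
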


Since $C$ is a Siegel disk, the resulting analytic component for $Q$ must be an $m$-Herman ring for some finite $m\geq 1$. Moreover, $Q$ must have bad reduction. The proof of the above theorem is based on Proposition~\ref{prop:RQ} and the following result, which we state for further reference.

\begin{proposition}[Proposition 5.2 in \cite{RL}]\label{prop:P5.2}
Let $X$ be an open connected affinoid and $R\in \bbC_p(z)$ be a rational map of degree at least 2 such that $R:X\to X$ has degree 1. Then $X\subset\E(R)$. Moreover, if for every projective system $\s$ there exists $n\geq 1$ so that $\deg_{R^n}(\s)>1$, then $X$ is an analytic component of $\E(R)$.
\end{proposition}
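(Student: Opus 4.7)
The plan is to prove the two assertions separately, the first being the main task.

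For the first assertion, that $X \subset \E(R)$: since $\deg(R|_X) = 1$, the restriction $R|_X$ is an analytic bijection of $X$ onto itself, and by induction each $R^n|_X$ is bijective of degree $1$. I would first show the family $\{R^n|_X\}_{n\geq 1}$ is equicontinuous on $X$ with respect to $\rho$. For any ball $B \subset X$, $R|_B$ has degree $1$ and sends $B$ onto a ball $R(B)$ by an affine similitude of some scaling factor $\alpha$; since the iterated images $R^n(B)$ remain in the chordal-bounded set $X$, this forces $|\alpha| \leq 1$, so $R$ is non-expanding on $B$ and $\{R^n|_X\}$ is uniformly equicontinuous.

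To produce a subsequence $R^{n_j} \to \mathrm{Id}$: the affinoid $X$ has finitely many ends $\p_0,\ldots,\p_m$, which $R$ permutes, so some iterate $R^N$ fixes every end. Then $\{R^{Nk}|_X\}_{k \geq 1}$ sits inside the group of degree-one self-maps of $X$ fixing each end, a group which is compact in the topology of uniform convergence via the affinoid structure of $X$ combined with the equicontinuity above. A uniformly convergent subsequence $R^{Nk_j} \to g$ therefore exists, and the composition $R^{Nk_j} \circ (R^{Nk_i})^{-1} = R^{N(k_j - k_i)}$ can be made arbitrarily close to the identity with $k_j - k_i \to \infty$, proving $X \subset \E(R)$.

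For the second assertion, suppose the additional hypothesis holds and, for contradiction, that $X$ lies in a strictly larger analytic component $C$ of $\E(R)$. Then at least one end $\p$ of $X$ fails to be an end of $C$: the component $C$ must contain both points of $X \subset D_\p$ and points of $B_\p$. Hence $C$ is not contained in any single disk of the projective system $\s$ associated to $\p$, giving $\s \prec C$. By Proposition~\ref{prop:ER}, $R^n$ is injective on $\E(R) = \E(R^n)$, and hence on $C$; Lemma~\ref{211} then forces $\deg_{R^n}(\s) = 1$ for every $n \geq 1$, contradicting the hypothesis.

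The main obstacle is the subsequence extraction $R^{n_j} \to \mathrm{Id}$: one needs the compactness of the group of degree-one end-fixing self-maps of $X$, which requires invoking the affinoid-theoretic structure of $X$. Once this is in hand, the equicontinuity estimate (from non-expansion on balls) and the contradiction step in the second assertion (via Lemma~\ref{211}) follow relatively cleanly from the definitions and preceding results.
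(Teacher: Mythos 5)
This statement is quoted from Rivera--Letelier (Proposition 5.2 in \cite{RL}); the paper itself gives no proof, so your attempt can only be judged on its own merits. Your argument for the second assertion is essentially right: if $X$ were properly contained in an analytic component $C$, some end $\p$ of $X$ would have $C$ meeting both $D_\p$ and $B_\p$, hence $\s\prec C$, and injectivity of $R^n$ on $\E(R^n)=\E(R)$ together with Lemma~\ref{211} forces $\deg_{R^n}(\s)=1$ for all $n$, contradicting the hypothesis. That part stands.

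The first assertion, however, has a genuine gap at the compactness step. The group of degree-one, end-fixing analytic self-bijections of $X$ is \emph{not} compact in the uniform topology, because $\bbC_p$ is not locally compact: already for $X$ a disk this group contains a copy of $\Oo^*=\{|a|=1\}$ (via $z\mapsto az$), which is bounded but not compact since the residue field $\tilde\bbC_p=\overline{\bbF_p}$ is infinite and the reduction $\Oo^*\to\overline{\bbF_p}^{\,*}$ has open fibres. Equicontinuity alone does not rescue you: Arzel\`a--Ascoli also requires relative compactness of point orbits, and that is precisely the content of what is to be proved. The mechanism that actually produces $R^{n_j}\to\mathrm{Id}$ is arithmetic, not topological: one reduces modulo $\mathfrak p$, uses that $\overline{\bbF_p}^{\,*}$ is a torsion group so that some iterate is congruent to the identity mod $\mathfrak p$ (e.g.\ $|a^{p^k-1}-1|<1$), and then shows that $p$-power iterates of such a map converge to the identity ($|b-1|<1$ implies $|b^{p^m}-1|\to 0$). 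This residue-field input is what makes quasiperiodicity work over $\bbC_p$, and it is absent from your argument. A smaller issue: from $R^n(B)\subset X$ for all $n$ you cannot conclude that each single step has scaling factor $|\alpha|\le 1$ (factors may oscillate); you only get a uniform Lipschitz bound $\diam(X)/\diam(B)$ for all iterates on $B$, which is still enough for equicontinuity but should be stated that way.
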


\subsection{Bad reduction}

The hypothesis of good reduction in Theorem~\ref{thm:GR} permits the selection of ends whose associated balls have all the same radii. This is not necessarily true for maps with bad reduction. Nevertheless, our first result states that, regardless of the reduction of $R$, we can retain control on the radius of $R(U)$ for a given disk (or ball) $U\subset \E(R)$. 

\begin{lemma}\label{lem:rad}
Let $R\in\mathbb{C}_p(z)$ be a rational function with $\mathcal{E}(R)\neq\emptyset$. Let $D\subset\mathcal{E}(R)$ be a disk of radius $r$ and $R(D)$ a disk of radius $t$. Then
$$|R(z_1)-R(z_2)|=\frac{t}{r}|z_1-z_2|,$$
for all $z_1,z_2\in D$. 
\end{lemma}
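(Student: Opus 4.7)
The plan is to expand $R$ as a convergent power series on $D$ and read the desired identity off its Newton polygon. Since $D \subset \E(R)$, Proposition~\ref{prop:ER}(2) gives that $R|_D$ is injective; since $R(D)$ is a disk and hence proper in $\bbP(\bbC_p)$, Proposition~\ref{prop:RonEnds}(2) applied to ends contained in $D$ shows $R$ has no poles in $D$. After translating the center of $D$ to $0$ we may write $R(z) = a_0 + a_1 z + a_2 z^2 + \cdots$, convergent on $D$, so the problem is reduced to a question about this power series.

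The non-archimedean maximum principle then yields $t = \sup_{z \in D}|R(z) - a_0| = \max_{k \ge 1}|a_k|r^k$. A standard Newton-polygon argument shows that injectivity of the series on $D$ forces $|a_1| \ge |a_k| r^{k-1}$ for every $k \ge 2$: if the reverse held for some $k$, then factoring $R(z) - R(z_2) = (z - z_2)h(z)$ and examining the dominant coefficient of $h$ produces a second preimage of $R(z_2)$ inside $D$. Combining these two facts forces the maximum defining $t$ to be attained at $k=1$, giving $t = |a_1| r$, i.e.\ $|a_1| = t/r$.

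For distinct $z_1, z_2 \in D$, I would then factor
$$R(z_1) - R(z_2) = (z_1 - z_2)\Bigl[a_1 + \sum_{k \ge 2} a_k \bigl(z_1^{k-1} + z_1^{k-2} z_2 + \cdots + z_2^{k-1}\bigr)\Bigr].$$
Each monomial $z_1^j z_2^{k-1-j}$ has modulus at most $r^{k-1}$, so every $k \ge 2$ contribution to the bracket has modulus strictly less than $|a_1|$; the non-archimedean triangle equality then forces the bracket to have modulus exactly $|a_1|$, and we obtain $|R(z_1) - R(z_2)| = |a_1||z_1 - z_2| = (t/r)|z_1 - z_2|$.

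The main obstacle is the Newton-polygon inequality $|a_1| \ge |a_k| r^{k-1}$ extracted from injectivity: it is folklore in $p$-adic analysis but the argument must distinguish between open and closed disks (closed-disk injectivity needs strict inequality, while the open-disk version is obtained by applying the closed-disk statement on each sub-disk $|z| \le \rho$ with $\rho < r$ and letting $\rho \to r$). Handling this carefully is what guarantees the strict domination by $|a_1|$ needed in the final factorization.
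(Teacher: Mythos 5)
Your proof is correct, but it takes a genuinely different route from the paper's. The paper works directly with the rational function: setting $f(z)=R(z)-R(z_2)$ and factoring numerator and denominator into linear factors over $\bbC_p$, injectivity of $R$ on $D$ (Proposition~\ref{prop:ER}(2)) places every zero of $f$ other than $z_2$ outside $D$, and boundedness of $R(D)$ places every pole outside $D$; the isosceles property then collapses each factor $|z-\alpha_i|$, $|z-\beta_j|$ to the constant $|z_2-\alpha_i|$, $|z_2-\beta_j|$, so $|f(z)|=C\,|z-z_2|$ for a constant $C$, which is identified as $t/r$ by letting $|z-z_2|\to r$. You instead expand $R$ as a power series on $D$ and invoke the Newton-polygon characterization of injectivity, $|a_1|\ge |a_k|r^{k-1}$ for $k\ge 2$ --- this is essentially Proposition~3.14 of Benedetto's thesis, which is precisely the alternative the authors point to in the remark following the lemma. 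Your route is more general (it applies to any convergent power series, not just rational maps restricted to $\E(R)$), but it hinges on the folklore injectivity criterion, which you only sketch and which carries the open-versus-closed-disk subtlety you rightly flag; the paper's argument sidesteps power-series convergence and the Newton polygon entirely and is self-contained. One small point in your last step: for $z_1,z_2$ in the \emph{open} disk the monomials $z_1^jz_2^{k-1-j}$ have modulus \emph{strictly} less than $r^{k-1}$, not merely ``at most,'' and it is exactly this strictness, combined with the non-strict bound $|a_k|r^{k-1}\le |a_1|$, that makes every $k\ge 2$ contribution strictly smaller than $|a_1|$ and lets the ultrametric equality deliver $|R(z_1)-R(z_2)|=|a_1|\,|z_1-z_2|$.
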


\begin{proof}
Let $z_1,z_2\in D$ and consider the function $f(z):=R(z)-R(z_2)$, which sends the disk $D_r(z_2)$ to $D_t(0)$. Proposition~\ref{prop:ER} establishes that $R$ is injective in $D=D_r(z_2)$, hence, $f$ has only one zero in $D_r(z_2)$. Moreover, we can find values $\alpha_i,\beta_j\notin D_r(z_2)$ for $1\leq i\leq n,1\leq j\leq m$ so that
\begin{equation}\label{eq:factor}
f(z)=a(z-z_2)\frac{(z-\alpha_1)\cdots(z-\alpha_n)}{(z-\beta_1)\cdots(z-\beta_m)},
\end{equation}
for some $a\in \bbC_p$.
Since $|z-z_2|<r\leq|z_2-\alpha_i|,|z_2-\beta_i|$ for each $i,j$, we obtain
\begin{align*}
|f(z)|&=|a(z-z_2)|\frac{|(z-\alpha_1)\cdots(z-\alpha_n)|}{|(z-\beta_1)\cdots(z-\beta_m)|},\\
&=|a(z-z_2)|\frac{|(z-z_2+z_2-\alpha_1)\cdots(z-z_2+z_2-\alpha_n)|}{|(z-z_2+z_2-\beta_1)\cdots(z-z_2+z_2-\beta_m)|},\\
      &=|a(z-z_2)|\frac{|(z_2-\alpha_1)\cdots(z_2-\alpha_n)|}{|(z_2-\beta_1)\cdots(z_2-\beta_m)|},
\end{align*}
where the last expression follows from the isosceles condition of the non-archimedean norm. The disk $R(D)$ has radius $t$, so $|f(z)|\rightarrow t$ when $|z-z_2|\rightarrow r$. This implies that
 $$\frac{t}{r}=|a|\frac{|(z_2-\alpha_1)\cdots(z_2-\alpha_n)|}{|(z_2-\beta_1)\cdots(z_2-\beta_m)|},$$
and together with (\ref{eq:factor}), we  conclude that $|R(z_1)-R(z_2)|=|f(z_1)|=\frac{t}{r}|z_1-z_2|$.
 
\end{proof}

\begin{remark}
Proposition~3.14 in \cite{BenT} provides a similar result for power series defined over a general non-archimedean field.
\end{remark}

We now state and prove our main result.

\begin{theorem}\label{GRL}
Let $R\in\mathbb{C}_p(z)$ be a rational function with an $n$-cycle of Siegel disks, $D_0,D_1,\ldots,D_{n-1}\subset \E(R)$, labeled in such a way that if $\rho_j$ is the radius of $D_j=R^j(D_0)$, then $\rho_0\leq\rho_j$ for $1\leq j\leq n-1$.

If the distance between $D_0$ and $D_j$ is the same as the distance between $D_1$ and $D_{j+1}$ for each $j\geq 1$, then there exists a rational function $Q\in\mathbb{C}_p(z)$ with $\deg(Q)=\deg(R)+1$ such that $Q$ has an $n$-cycle of 1-Herman rings.    
\end{theorem}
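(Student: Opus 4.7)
The approach is to perturb $R$ by a simple pole at a carefully chosen point of $D_0$, producing
\[
Q(z) := R(z) + \frac{a}{z-c},
\]
with $c\in D_0$ a fixed point of $R^n$ (which exists by Proposition~\ref{511} applied to $R^n$ on the fixed Siegel disk $D_0$) and $a\in\bbC_p$ of very small absolute value. After conjugating by an element of $\PGL(2,\Oo)$, one may assume $c=0$. Since $R$ is injective on $\E(R)$ (Proposition~\ref{prop:ER}) and $R(0)=c_1\in D_1$, $R$ is finite at $0$, so $Q$ genuinely acquires one new simple pole and $\deg(Q)=\deg(R)+1$. Moreover $c_j:=R^j(0)\in D_j$ for all $j$ and $c_n=0$.

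Write $D_j=B_{\rho_j}(c_j)$ and, for a fixed radius $s_0\in|\bbC_p|$ with $s_0<\rho_0$, define the candidate $1$-Herman rings
\[
s_j := s_0\,\rho_j/\rho_0,\qquad H_j := D_j \setminus B_{s_j}(c_j).
\]
Since $c_n=0$ and $\rho_n=\rho_0$ give $s_n=s_0$, the cycle $H_0,\dots,H_{n-1}$ closes up. The task is to show that for $|a|$ sufficiently small, $Q$ maps each $H_j$ bijectively onto $H_{j+1}$.

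The key estimates combine Lemma~\ref{lem:rad} with the ultrametric property. Lemma~\ref{lem:rad} already guarantees $R(H_j)=H_{j+1}$ on the nose, because the hole radii $s_j$ were chosen to scale exactly with the stretching factors of $R$. For $j\geq 1$, the ultrametric gives $|z|=d_j:=\dist(D_0,D_j)$ for every $z\in D_j$, so the perturbation is uniform: $|Q(z)-R(z)|=|a|/d_j$. A standard non-archimedean triangle inequality argument then shows $Q(H_j)\subseteq H_{j+1}$ as soon as $|a|/d_j<s_{j+1}$ (and $|a|<s_0 s_1$ for $j=0$), and that $Q$ is injective on each $H_j$. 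The distance hypothesis $\dist(D_0,D_j)=\dist(D_1,D_{j+1})$ enters precisely when one verifies that the extra preimages of each $y\in H_{j+1}$ coming from the new pole, namely $z_*\approx a/(y-c_1)$, land inside $B_{s_0}(0)$ rather than in $H_0$: the bound $|z_*|\leq|a|/\dist(D_1,D_{j+1})$ is paired, via the distance hypothesis, with $|a|/d_j$, so a single smallness condition on $|a|$ simultaneously controls the perturbation of $Q$ on $D_j$ and the size of the pole-generated preimages of $H_{j+1}$.

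Finally, Proposition~\ref{prop:P5.2} closes the argument. Since $Q^n$ is a degree-$1$ self-map of each $H_j$, $H_j\subset\E(Q^n)=\E(Q)$. Proposition~\ref{prop:RQ}, applied at the outer end of each $D_j$, preserves the Siegel-disk degree-$>1$ condition of Theorem~\ref{thm:3} when passing from $R$ to $Q$ on the outer projective systems, while the simple pole at $0$ gives the analogous degree-$>1$ condition at the inner projective system of each $H_j$ under iteration. Thus every $H_j$ is an analytic component of $\E(Q)$, giving the desired $n$-cycle of $1$-Herman rings. I expect the main technical obstacle to be this simultaneous balancing of smallness conditions on $|a|$: the distance hypothesis is exactly the structural assumption that makes all of them compatible for a single value of $a$.
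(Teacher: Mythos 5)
Your overall strategy is the same as the paper's: drill a small ball around a periodic point of $D_0$ and its forward images, add a single pole inside the hole so that $\deg(Q)=\deg(R)+1$, use Lemma~\ref{lem:rad} and Proposition~\ref{prop:RQ} to show that $Q$ and $R$ agree on the boundary ends of the rings (hence $Q$ maps each ring to the next with degree one), and close with Proposition~\ref{prop:P5.2}. The perturbation itself is different: you take the additive form $Q=R+a/(z-c)$, whereas the paper multiplies $R(z)-R(z_0)$ by $\tfrac{z-z_0}{z-z_0-\mu}$. However, there is a genuine gap at the maximality step, and the phrase ``$a$ of very small absolute value'' is not merely imprecise --- it is the wrong condition. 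For $H_j$ to be a $1$-Herman ring it must be an \emph{analytic component} of $\E(Q)$, and Proposition~\ref{prop:P5.2} requires $\deg_{Q^n}(\s)>1$ on the projective system $\s_0$ of the inner boundary ball $B_{s_0}(0)$. Normalizing by $\psi_0(z)=z/s_0'$ and $\psi_1(z)=(z-c_1)/s_1'$ with $|s_0'|=s_0$, $|s_1'|=s_1$, one gets
\[
Q_*(w)=R_*(w)+\frac{a}{s_0's_1'}\cdot\frac{1}{w},
\]
and the coefficient of the pole term has absolute value $|a|/(s_0s_1)$. If $|a|<s_0s_1$ this coefficient reduces to $0$ modulo $\mathfrak p$, so $\tilde Q_*=\tilde R_*$ has degree one (by Lemma~\ref{211}, since $R$ is injective on $D_0$), and $H_j$ is then a \emph{proper} subset of its analytic component: your construction produces rings that are not maximal, hence not the required $1$-Herman rings. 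The correct choice is $|a|=s_0s_1$ exactly, which is the precise analogue of the paper's normalization $|\mu|=r$; your sentence ``the simple pole at $0$ gives the analogous degree-$>1$ condition'' is exactly the assertion that fails for generically small $a$ and needs this computation.

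A secondary issue: your account of where the distance hypothesis enters does not match the actual pressure point. In the paper's multiplicative construction the perturbation on $D_j$ ($j\ge 1$) has size $r\,\dist(D_1,D_{j+1})/\dist(D_0,D_j)$, which must not exceed the image-hole radius $r\rho_{j+1}/\rho_0$; this constraint is scale-invariant in $r$, so it cannot be met by shrinking the parameter and genuinely requires the hypothesis on distances. In your additive setup, once $|a|=s_0s_1$ is imposed, the corresponding constraint for $j\ge 1$ reads $s_0\le \dist(D_0,D_j)\,\rho_{j+1}/\rho_1$, which can be arranged by shrinking $s_0$ --- so a repaired version of your argument might even dispense with the distance hypothesis. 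But that is a stronger claim than the theorem, and you neither state it nor verify the estimates that would justify it; as written, the role you assign to the hypothesis (controlling pole-generated preimages of $H_{j+1}$) is not where the argument actually needs it.
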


\begin{proof}
After a change of coordinates, assume that $D_j\subset B_1(0)$ for $0\leq j\leq n-1$, so each Siegel disk is a disk in $\bbC_p$. From Proposition~\ref{511}, we can assume the existence of an $n$-periodic point $z_0\in D_0$. Let $r\in|\mathbb{C}_p|$ and $\mu\in\mathbb{C}_p$ so that $|\mu|=r<\rho_0$, and define
\begin{equation}\label{eq:Q}
Q(z)=\frac{z-z_0}{z-z_0-\mu}(R(z)-R(z_0))+R(z_0).
\end{equation}

Our goal is to show the collection $\{D_j-R^j(B_r(z_0))\}_{j=0}^{n-1}$ is an $n$-cycle of analytic components in $\E(Q)$, and thus, they define an $n$-cycle of 1-Herman rings for $Q$.

We start by showing that $Q$ acts injectively on $D_j-R^j(B_r(z_0))$ for each $0\leq j\leq n-1$. We start with the case $j=0$: let $\p$ be an end whose associated ball, $B_\p$, lies in $D_0-B_r(z_0)$ and has radius $r<\rho_0$. For any pair $z_1, z_2\in B_\p\subset D_0$, we can apply Lemma~\ref{lem:rad} to $D_0$ and $D_1$ to obtain
$$|R(z_1)-R(z_2)|= \frac{\rho_1}{\rho_0}|z_1-z_2|\leq \frac{\rho_1}{\rho_0}r.$$
Since $z_1$ and $z_2$ are arbitrary, the radius of $B_{R(\p)}$ must be $\frac{\rho_1}{\rho_0}r$. Now consider an arbitrary point $z\in B_\mathcal{P}$ and observe that $z,z_0\in D_0$, hence, their images lie in $D_1$. In order to show $Q$ and $R$ act exactly the same over ends, we estimate
\begin{align*}
|Q(z)-R(z)|&=\left|\frac{z-z_0}{z-z_0-\mu}(R(z)-R(z_0))+R(z_0)-R(z)\right|, \\
           &=\frac{|\mu(R(z)-R(z_0))|}{|z-z_0-\mu|}=|\mu|\frac{|R(z)-R(z_0)|}{|z-z_0|},
\end{align*}
where the last identity follows after applying the isosceles property in the denominator. Since $z\in B_\p$ is arbitrary, the non-archimedean condition implies that  $|z-z_0|=\rho_0$ while $|R(z)-R(z_0)|=\rho_1$. Then
$$|Q(z)-R(z)|=r \frac{\rho_1}{\rho_0},$$
and Proposition~\ref{prop:RQ} implies that $Q(\mathcal{P})=R(\mathcal{P})$ and $\deg_Q(\mathcal{P})$ $= \deg_R(\mathcal{P})=1$. That is, $Q$ acts injectively on $D_0-B_r(z_0)$.

Now consider any $1\leq j\leq n-1$. Let $\p_j$ be an end whose associated ball, $B_{\mathcal{P}_j}$, lies in $D_j-R^j(B_r(z_0))$ and has radius $\frac{\rho_j}{\rho_0}r<\rho_j$. As before, selecting two arbitrary points in $B_{\p_j}\subset D_j$ and applying Lemma~\ref{lem:rad} to $D_j$ and $D_{j+1}$, one concludes that $B_{R(\mathcal{P}_j)}$ has radius $\frac{\rho_{j+1}}{\rho_0}r$ (when $j=n-1$, the subscript $j+1$ is taken mod $n$).

Given an arbitrary point $z\in B_{\mathcal{P}_j}$, we now observe that $z_0\in D_0$ while $z\in D_j$. By the distance assumption on the Siegel disks, namely $\dist(D_0,D_j)=\dist(D_1,D_{j+1})$, it follows that $|z-z_0|=|R(z)-R(z_0)|$. Then, the estimate becomes
\begin{align*}
|Q(z)-R(z)|&=|\mu|\frac{|R(z)-R(z_0)|}{|z-z_0|}=r< \frac{\rho_{j+1}}{\rho_0}r.
\end{align*}
Therefore $Q(\mathcal{P}_j)=R(\mathcal{P}_j)$ and $\deg_Q(\mathcal{P}_j)=\deg_R(\mathcal{P}_j)=1$. Once more, we conclude that $Q$ acts injectively on $D_j-R^j(B_r(z_0))$ for any $1\leq j\leq n-1$.

The first part of Proposition~\ref{prop:P5.2} implies that each $D_j-R^j(B_r(z_0))$ lies in $\E(Q)$. In order to show each ring is an analytic component, we need to prove that $Q$ acts on projective systems with degree larger than one.

Since the disk $D_r(z_0)$ lies in $\E(R)$, then $R$ acts injectively on it. Thus, for any projective system $\s_0$ associated to $D_r(z_0)$, Lemma~\ref{211} shows that $\deg_R(\s_0)=1$. Consider the Mobius transformations $\psi_r,\psi_t\in \PGL(2,\Oo)$, where $\psi_r$ sends $D_r(z_0)$ to $D_1(0)$ while $\psi_t$ sends $R(D_r(z_0))=D_t(R(z_0))$ to $D_1(0)$. Recall that condition $\deg_R(\s_0)=1$ is equivalent to state that $\deg(\tilde R_*)=1$, where $R_*=\psi_t\circ R\circ \psi_r^{-1}$ preserves the canonical projective system $\s_c$ associated to the canonical disk $D_1(0)$.

In order to compute $Q_*$ and the degree of its reduction, first observe the image of $D_r(z_0)$ under both $R$ and $Q$ is a disk of radius $t$. Indeed, expression (\ref{eq:Q}) implies that for any $z\in D_r(z_0)$,
$$|Q(z)-Q(z_0)|= \frac{|z-z_0||R(z)-R(z_0)|}{|z-z_0-\mu|} = \frac{|z-z_0|}{r}|R(z)-R(z_0)|.$$
As $|z-z_0|\to r$, we obtain $|Q(z)-Q(z_0)|\to |R(z)-R(z_0)|=t$ and thus $Q(D_r(z_0))=D_t(Q(z_0))$. Thus, we define $Q_*=\psi_t\circ Q\circ \psi_r^{-1}$, and since $Q(z_0)=R(z_0)$, the expression of $Q_*$ becomes
\begin{align*}
Q_*(z) = &\frac{Q(rz+z_0)-Q(z_0)}{t},\\
          =&\frac{rz}{rz-\mu}\left( \frac{R(rz+z_0)-R(z_0)}{t}\right), \\
          =&\left(\frac{z}{z-\mu/r}\right) R_*(z).
\end{align*}

The coefficients in parenthesis of the last identity belong to $\Oo^*$ and $\deg(\tilde R_*)=1$. Thus, $\tilde Q_*$ is a rational function of degree 2, and in particular, $\deg_Q(\s_0)>1$. As every end associated to $R^j(D_r(z_0))$ is periodic under $Q$ with period $n$, then these ends are fixed under $Q^n$. Since $Q$ is injective in $D_0-B_r(z_0)$ and acts $2$-to-$1$ over the ends of the projective systems, there exist ends $\mathcal{P}_0,\mathcal{P}_1\in \s_0$ such that $Q(\mathcal{P}_0)=Q(\mathcal{P}_1)$. Similarly, if $\s_j$ denotes a projective system associated to $R^j(D_r(z_0))$ for each $0< j\leq n-1$, there exist $\mathcal{P}_{0,j},\mathcal{P}_{1,j}\in \s_j$ such that $Q^n(\mathcal{P}_{0,j})=Q^n(\mathcal{P}_{1,j})$, and therefore $\deg_{Q^n}(\s_j)>1$. From Proposition~\ref{prop:P5.2}, each $D_j-R^j(B_r(z_0))$ is an analytical component of $\E(Q^n)$ and hence, it is an analytical component of $\mathcal{E}(Q)$ by Proposition~\ref{prop:ER}. We conclude from Theorem~\ref{thm:3} that the collection
$$\{D_j-R^j(B_r(z_0))\}_{j=0}^{n-1}$$
is an $n$-cycle of $1$-Herman rings for $Q$.

\end{proof}

\section{Examples}

We provide a couple of examples that realize the setting described in Theorem~\ref{GRL}. The first example considers a rational map $R$ with a $2$-cycle of Siegel disks $\{D_0, D_1\}$ and a point $z_0\in D_0$ which is $2$-periodic under $R$. In the second example, we show that the periodicity condition imposed to $z_0$ can be relaxed: indeed, we construct $Q$ by selecting a non-periodic point in $D_0$ whose second iterate comes back to $D_0$ without being periodic.

\subsection*{General setting}
Let $p=5$ and consider $R\in\mathbb{C}_5(z)$ defined by
\begin{equation}\label{eq:r}
R(z)=\frac{z^2-z/5}{z^2-1},
\end{equation}
or in homogeneous coordinates
$$R[X,Y]=[5X^2-XY,5X^2-5Y^2],$$
with normalized polynomials. The reduction of $R$ becomes $\tilde R(X,Y)=[-XY,0]$, that is, $R$ has bad reduction. A direct computation shows that the origin is a  fixed point of $R$ with multiplier $\lambda=1/5$ and thus, repelling under the $p$-adic norm, as $|1/5|=5$. From Proposition~\ref{prop:JF}, it follows that $J(R)$ is not empty.

A straightforward computation shows that $R$ also has an indifferent 2-cycle at $\{1,\infty\}$. Let us show the disks $D_1(1)$ and $D_5(\infty):=\{z~|~|z|>5\}\cup\{\infty\}$ form a 2-cycle of Siegel disks.

For any $z\in D_1(1)$, it can be written as $z=1+\alpha$, with $|\alpha|<1$. Then
$$|R(z)| =\frac{|z||z-1/5|}{|z-1||z+1|} = \frac{|1+\alpha+1/5|}{|\alpha| |2+\alpha|} = \frac{5}{|\alpha|}>5,
$$
so in particular $R$ sends $D_1(1)$ into $D_5(\infty)$. A similar computation shows that if $z\in D_5(\infty)$, then $|R(z)-1|=5/|z|<1$, so $D_5(\infty)$ maps into $D_1(1)$. We claim $R^2(D_1(1))=D_1(1)$. Indeed, for $z=1+\alpha$, $|\alpha|<1$,
\begin{align*}
|R^2(z)-1|&=\left|\frac{(R(z))^2-R(z)/5}{(R(z))^2-1}-1\right|=\left|\frac{-R(z)/5+1}{(R(z))^2-1}\right|=\left|\frac{-R(z)/5}{(R(z))^2}\right|,\\
          &=5\left|\frac{R(z)}{(R(z))^2}\right|=5\left|\frac{1}{R(z)}\right|=5\left|\frac{z^2-1}{z^2-z/5}\right|,\\
          &=5\left|\frac{(1+\alpha)^2-1}{(1+\alpha)^2-\frac{1+\alpha}{5}}\right|=5|\alpha|\left|\frac{2+\alpha}{(1+\alpha)^2+1/5+\alpha/5}\right|=|\alpha|.
\end{align*}
This computation shows that $R^2$ does not expand or contract the norm in $D_1(1)$, hence, the collection $\{D_1(1),D_5(\infty)\}$ is a $2$-cycle of Siegel disks for $R$. Under the conjugacy of the automorphism $\varphi(z)=1/z$ we obtain
\begin{equation}\label{eq:rfi}
R^{\varphi}(z)=\varphi \circ R\circ \varphi^{-1}(z)=\frac{5-5z^2}{5-z}
\end{equation}
has a $2$-cycle of Siegel disks $\{D_{1/5}(0),D_1(1)\}$ in $\bbC_p$. Moreover, $R^\varphi$ also has bad reduction. Both $R^\varphi$ and its cycle of Siegel disks satisfy the hypothesis of Theorem~\ref{GRL}, so we can proceed to construct $Q\in \bbC_p(z)$ with $\deg(Q)=3$ that exhibits a $2$-cycle of 1-Herman rings.

\subsection*{Example 1}
Set  $D_0:=D_{1/5}(0), D_1:=D_1(1)$, let $\mu=25, r=1/25$ and $z_0=0$, which is a $2$-periodic point that lies in $D_0$. The expression in (\ref{eq:Q}) applied to $R^\varphi$ defines the new rational function
\begin{align*}
Q(z)&=\frac{z}{z-25}(R^\varphi(z)-1)+1,\\
&= \frac{5z^3-30z+125}{(z-25)(z-5)}.
\end{align*}

Let us compute the image of the ball $B_{1/25}(0)$ under $R^\varphi$. First, observe that $z_1=125\in D_{1/25}(0)$. Set $z_2=0$. Since $R^\varphi(125)=651$ and $R^\varphi(0)=1$, Lemma~\ref{lem:rad}  implies that
\begin{align*}
\frac{1}{25}=|650|=|R^\varphi(125)-R^\varphi(0)| = \frac{t}{1/25}|125|= \frac{t}{5}.
\end{align*}
Thus $t=1/5$ and $R^\varphi$ maps $B_{1/25}(0)$ to $B_{1/5}(1)$. Now, consider the rings
$$A_{1/25}^{1/5}(0)=\{z\in\mathbb{C}_p~|~1/25<|z|<1/5\}=D_0-B_{1/25}(0),$$
and
$$A_{1/5}^1(1)=\{z\in\mathbb{C}_p~|~1/5<|z-1|<1\}=D_1-B_{1/5}(1).$$

Theorem~\ref{GRL} implies that these rings form a 2-cycle under $Q$ and this map does not expand or contract the metric, as expected. Observe also that $A_{1/25}^{1/5}(0)$ (and hence $A_{1/5}^1(1)$) are maximal analytic components of quasiperiodicity: if we consider a ring domain $A_\rho^{1/5}(0)$ with $\rho<1/25$, then the pole at $\mu$ must lie in its interior, so $Q(A_\rho^{1/5}(0))$ must contain a disk $D_\delta(\infty)\subset \bbP(\bbC_p)$ for some $\delta<1$. In particular, $Q$ will not longer be injective in $A_\rho^{1/5}(0)$, a contradiction with part (2) in Proposition~\ref{prop:ER}.

\subsection*{Example 2}

Consider again $R^\varphi$ as in (\ref{eq:rfi}) and $p=5$. Here we stress that the periodicity condition imposed on $z_0$ can be relaxed: in order to construct $Q$, it is enough to select a non-periodic point in a Siegel disk whose second iterate returns to the same disk. 

Recall that the non-archimedean condition implies that any point in a disk is its center. Thus, we now work with the disks
$$D_0:=D_{1/25}(125)=D_{1/25}(0)~~~\text{ and }~~~D_1:=D_{1/5}(651)=D_{1/5}(1).$$
As $D_0\subset D_{1/5}(0)$ and $D_1\subset D_1(1)$, it is clear that $R^\varphi$ acts injectively on these disks. And from the previous example, $R^\varphi$ maps $D_0$ to $D_1$, thus $\{D_0, D_1\}$ forms a new $2$-cycle of Siegel disks for $R^\varphi$. 
Therefore, we can apply Theorem~\ref{GRL} and define a rational function $Q$ with $\mu=25$, $r=1/25$ and $z_0=125$. Observe that in this case, $z_0$ is not longer $2$-periodic since  $R^\varphi(125)=615$ and $R^\varphi(615)=3280\frac{60}{323} \neq 125$.
The new rational map is given by
\begin{align*}
Q(z)&=\frac{z-125}{z-150}(R^\varphi(z)-651)+651, \\
&=\frac{5z^3-1276z^2+83974z-308600}{150-z}.
\end{align*}

Again, by Theorem~\ref{GRL}, the rings $A_{1/25}^{1/5}(125)=A_{1/25}^{1/5}(0)$ and $A_{1/5}^1(651)=A_{1/5}^1(1)$ form a $2$-cycle of 1-Herman rings for our new map $Q$.

\end{document}